\newtheorem{lemma}{Lemma}[section]
\newtheorem{theorem}{Theorem}[section]
\newtheorem*{proposition}{Proposition}
\theoremstyle{definition}
\newtheorem{definition}{Definition}[section]
\newtheorem{remark}{Remark}[section]
\newtheorem{question}{Question}[section]
\numberwithin{equation}{section}
\begin{document}

\thispagestyle{empty}

\begin{center}
    \mbox{\hskip -1.2cm \fontsize{12}{14pt}\selectfont
    \textbf{A new gap in the critical exponent for semi-linear structurally damped evolution equations }} \\[0mm]
\end{center}

\medskip

\ \!\!\!\hrulefill \

\begin{center}
\textbf{\fontsize{11}{14pt}\selectfont Khaldi Said, Arioui Fatima Zahra and Hakem Ali}
\end{center}
\vskip+1cm
\thispagestyle{empty}

\noindent \textbf{Abstract.}
Our aim in this paper is to discuss the critical exponent in semi-linear structurally damped wave and beam equations with additional dispersion term. The special model we have in mind is

$$
u_{tt}(t,x)+(-\Delta)^{\sigma}u(t,x)+(-\Delta)^{2\delta}u(t,x)+2(-\Delta)^{\delta}u_{t}(t,x)=\left|u(t,x)\right| ^{p}
$$ 
where the initial displacement $u(0,x)=u_{0}(x)$, the initial velocity $u_{t}(0,x)=u_{1}(x)$ and the parameters $ t\in [0,\infty)$, $x\in \mathbb{R}^{n}$, $\sigma\geq 1$, $\delta\in(0,\frac{\sigma}{2})$, $p>1$. 

The solution to the linear equation at low frequency region involves an interplay of diffusion and oscillation phenomena represented by a real-complex Fourier multiplier of the form 
 $$m(t,\xi)=\frac{e^{-|\xi|^{2\delta}t\pm i|\xi|^{\sigma}t}}{2i|\xi|^{\sigma}}, \ \ \xi\in \mathbb{R}^{n}, \ \ i=\sqrt{-1}.$$ 

The scaling argument shows that the diffusive part leads to faster decay rates compared to the oscillatory one. This interplay creates a new gap in the critical exponent between the blow up (in finite time) result when $1<p<1+\frac{4\delta}{n-2\delta}$ (sub-critical case) and the global (in time) existence result
when $p>1+\frac{\sigma+2\delta}{n-\sigma}$ (super-critical case).

We leave an open to show if this gap will be closed at least in low or high space dimensions because, to the best of authors knowledge, the necessary Fourier multiplier that leads to the sub-critical case does not explicitly appear in $m(t,\xi)$.
\vskip+0.5cm

\noindent \textbf{2010 Mathematics Subject Classification.}
    42B10, 35G10, 35B45

\medskip

\noindent \textbf{Key words and phrases.} Wave equation, beam equation, structural damping, dispersion terms, global existence, blow up, critical exponent.

\vskip+0.4cm

 \newpage
\pagestyle{fancy}
\fancyhead{}
\fancyhead[EC]{\hfill \textsf{\footnotesize Khaldi Said et al}}
\fancyhead[EL,OR]{\thepage}
\fancyhead[OC]{\textsf{\footnotesize A new gap in the critical exponent for semi-linear structurally damped evolution equations}\hfill }
\fancyfoot[C]{}
\renewcommand\headrulewidth{0.5pt}
\section{Introduction}
Many interesting phenomena in nature can be characterized as diffusion, oscillation or coupled diffusion-oscillation due to the presence of real and complex characteristic roots in formal solutions to the linear evolution equations modeling them, as is well known from a mathematical Fourier analysis and physical perspective. For instance, a first-order evolution equation of anomalous diffusion and a second-order evolution equation of waves being particularly most important. The Schro\"{o}dinger equation is an exception, since it is a first-order evolution equation that explicitly includes the complex factor $i$. More precisely, let us distinguish between the above three phenomena that appear in the formal solution to some linear evolution models listed below:
\begin{itemize}
	\item Diffusion, which means that the formal solution contains a real Fourier multiplier of the form $e^{-c|\xi|^{a}t}$, where $t>0$, $\xi\in \mathbb{R}^{n}$ and  $a, c$ are some strictly positive constants. For example, this phenomenon appears in the following models:
	\begin{itemize}
		\item Anomalous diffusion equation(heat equation when $\sigma=1$)
		\begin{equation}\label{1.1}
		v_{t}(t,x)+c(-\Delta)^{\sigma}v(t,x)=0,\  \ \  v(0,x)=v_{0}(x), \ \ \ \sigma>0,
		\end{equation}
		the formal solution in the Fourier analysis is given by
		\begin{equation*} \hat{v}(t,\xi)=e^{-c|\xi|^{2\sigma}t}\hat{v_{0}}(\xi), 
		\end{equation*}
		which is valid for all frequency regions,  for more details see  \cite{EbertReissig, vazquez, dabbicco-ebert-diffusion phenomenon} and reference therein. 
		\item Wave or beam equations with frictional damping (waves when $\sigma=1$, beam when $\sigma=2$),
		\begin{equation}\label{1.2}
		u_{tt}(t,x)+(-\Delta)^{\sigma}u(t,x)+u_{t}(t,x)=0,\  u(0,x)=0, \ u_{t}(0,x)=u_{1}(x), 
		\end{equation}
		the formal solution in the Fourier analysis is given by
		\begin{equation*}
		\hat{u}(t,\xi)=\frac{e^{-\frac{|\xi|^{2\sigma}}{1+\sqrt{1-4|\xi|^{2\sigma}}}t}-e^{-t-\sqrt{1-4|\xi|^{2\sigma}}t}}{\sqrt{1-4|\xi|^{2\sigma}}}\hat{u_{1}}(\xi)
		\end{equation*}
	which is valid at low frequency region $\{\xi \in \mathbb{R}^{n} : |\xi|<(1/4)^{1/2\sigma}\}$, for more details see  \cite{fujiwara-ikeda-wakasugi, duong reissig, matsumura 1, matsumura 2} and reference therein. 
		\item Wave or beam equations with effective structural damping
		\begin{equation}\label{1.3}
		u_{tt}(t,x)+(-\Delta)^{\sigma}u(t,x)+(-\Delta)^{\delta}u_{t}(t,x)=0,\  u(0,x)=0, \ u_{t}(0,x)=u_{1}(x), 
		\end{equation}
		the formal solution in the Fourier analysis is given by
		\begin{equation*}
		\hat{u}(t,\xi)=\frac{e^{-\frac{|\xi|^{2\sigma-2\delta}}{1+\sqrt{1-4|\xi|^{2\sigma-4\delta}}}t}-e^{-|\xi|^{2\delta}t-|\xi|^{2\delta}\sqrt{1-4|\xi|^{2\sigma-4\delta}}t}}{|\xi|^{2\delta}\sqrt{1-4|\xi|^{2\sigma-4\delta}}}\hat{u_{1}}(\xi), \ \delta \in (0,\sigma/2)
		\end{equation*}
		which is valid at low frequency $\{\xi \in \mathbb{R}^{n} :|\xi|<(1/4)^{1/(2\sigma-4\delta)}\}$, for more details see \cite{dabbicco-ebert-a classification,dabbicco-ebert-new phenomenon,pham-reissig-kainan} and reference therein. 
		\item Wave or beam equations with effective critical structural damping
		\begin{equation}\label{1.4}
		u_{tt}(t,x)+(-\Delta)^{\sigma}u(t,x)+\mu(-\Delta)^{\sigma/2}u_{t}(t,x)=0,\  u(0,x)=0, \ u_{t}(0,x)=u_{1}(x), 
		\end{equation}
		the formal solution in the Fourier analysis is given by
		\begin{equation*}
		\hat{u}(t,\xi)=\frac{e^{-\frac{|\xi|^{\sigma}}{\mu+\sqrt{\mu^{2}-4}}t}-e^{-\mu/2|\xi|^{\sigma}t-1/2|\xi|^{\sigma}\sqrt{\mu^{2}-4}t}}{|\xi|^{\sigma}\sqrt{\mu^{2}-4}}\hat{u_{1}}(\xi), \ \ \mu\in (2,\infty)
		\end{equation*}
		or
		\begin{equation*}\label{1.5}
		\hat{u}(t,\xi)=te^{-|\xi|^{\sigma}t}\hat{u_{1}}(\xi), \ \ \mu=2,
		\end{equation*}
		for more details see \cite{dabbicco-ebert-a classification,dabbicco-ebert-new phenomenon,pham-reissig-kainan} and reference therein.
	\end{itemize}
	\item Oscillations, which means that the formal solution contains a complex Fourier multiplier of the form $e^{-ic|\xi|^{a}t}$. For example, this phenomenon appears in the following models:
	\begin{itemize}
		\item Schr\"{o}dinger equation, see \cite{EbertReissig} and reference therein,
		\begin{equation}\label{1.5}
		v_{t}(t,x)+i(-\Delta)^{\sigma}v(t,x)=0,\  v(0,x)=v_{0}(x),
		\end{equation}
		the formal solution in the Fourier analysis is given by
		\begin{equation*}
		 \hat{v}(t,\xi)=e^{-i|\xi|^{2\sigma}t}\hat{v_{0}}(\xi).
		\end{equation*}
		\item Free wave and beam equations, see \cite{ebert-Lourenco} and reference therein,
		\begin{equation}\label{1.6}
		u_{tt}(t,x)+(-\Delta)^{\sigma}u(t,x)=0,\  u(0,x)=0, \ u_{t}(0,x)=u_{1}(x), 
		\end{equation}
		the formal solution in the Fourier analysis is given by
		\begin{equation*}
		 \hat{u}(t,\xi)=\frac{e^{-i|\xi|^{\sigma}t}-e^{i|\xi|^{\sigma}t}}{2i|\xi|^{\sigma}}\hat{u_{1}}(\xi).
		\end{equation*}
	\end{itemize}
	\item Coupled diffusion-oscillation, which means that the formal solution contains both real and complex Fourier multipliers of the form $e^{-c|\xi|^{a}t\pm i|\xi|^{b}t}$ with $a, b, c>0$. This case itself contains three different phenomena as follows:
	\begin{itemize}
		\item If $a<b$, diffusion is more dominant than oscillation, in the sense that the scaling argument for the diffusive part leads to a better decay rate than that for the oscillatory part. For example,
		\begin{itemize}
			\item [-]Generalized Schr\"{o}dinger equation
			\begin{equation}\label{1.7}
			v_{t}(t,x)+(-\Delta)^{2\delta}v(t,x)+i(-\Delta)^{\sigma}v(t,x)=0,\  u(0,x)=v_{0}(x), \ \ 2\delta<\sigma.
			\end{equation}
			\item [-] Wave and beam equations with structural damping and additional dispersion term (our interest)
			\begin{equation}\label{1.8}
			u_{tt}(t,x)+(-\Delta)^{2\delta}u(t,x)+(-\Delta)^{\sigma}u(t,x)+2(-\Delta)^{\delta}u_{t}(t,x)=0, \ \ 2\delta<\sigma, 
			\end{equation}
			$$u(0,x)=0, \ u_{t}(0,x)=u_{1}(x),$$
			the formal solution in the Fourier analysis is given by
			\begin{equation*}
			\hat{u}(t,\xi)=\frac{e^{-|\xi|^{2\delta}t+i|\xi|^{\sigma}t}-e^{-|\xi|^{2\delta}t-i|\xi|^{\sigma}t}}{2i|\xi|^{\sigma}}\hat{u_{1}}(\xi).
			\end{equation*}
		\end{itemize}
		\item If $a=b$ then the diffusion plays the same role as oscillation in the sense that they lead to the same decay rate.
		\item If $a>b$, oscillation is more dominant than diffusion, using the same above sense. We remark that this phenomenon can be described again by models (\ref{1.7})-(\ref{1.8}) when $\sigma<2\delta$ or by the following model:
		\begin{itemize}
			\item[-] Wave and beam equations with non-effective structural damping
			\begin{equation}\label{1.9}
			u_{tt}(t,x)+(-\Delta)^{\sigma}u(t,x)+(-\Delta)^{\delta}u_{t}(t,x)=0,\  u(0,x)=0, \ u_{t}(0,x)=u_{1}(x),
			\end{equation}
			the formal solution in the Fourier analysis is given by
			\begin{equation*}
			\hat{u}(t,\xi)=\frac{e^{-|\xi|^{2\delta}t+i|\xi|^{\sigma}\sqrt{1-4|\xi|^{4\delta-2\sigma}}t}-e^{-|\xi|^{2\delta}t-i|\xi|^{\sigma}\sqrt{1-4|\xi|^{4\delta-2\sigma}}t}}{2i|\xi|^{\sigma}\sqrt{1-4|\xi|^{4\delta-2\sigma}}}\hat{u_{1}}(\xi),
			\end{equation*}
			provided that  $\sigma<2\delta<2 \sigma$ and $ |\xi|<(1/4)^{1/(4\delta-2\sigma)}$. For more details see \cite{dabbicco-ebert-a classification,pham-reissig-kainan,dabbicco-ebert-lp-lq, dabbicco-ebert-critical exponent} and reference therein.
		\end{itemize}	
	\end{itemize}
\end{itemize} 

We have now discussed all possible scenarios of interaction between diffusion and oscillation in linear dynamical systems. To the best of authors knowledge, the dynamical systems that satisfy pure diffusion at low frequency region always achieve optimal results, not only in terms of the decay rate but also in the critical exponent to the corresponding semi-linear evolution models. On the contrary, the situation pose some challenges in those of pure oscillation or coupled diffusion-oscillation. 

After the seminal paper \cite{fujita} of H. Fujita in 1966 about the semilinear heat equation, several mathematicians focus their research works to investigate both global (in time) existence of small data solutions and blow up (in finite time) of weak solutions to some semi-linear evolution equations with different power nonlinearities in the right hand side, in particular, to find one of the most important values known as \textit{Fujita critical exponent} denoted by $p^{Fuj}$. In general, critical exponent is exactly a threshold that divides the range of power nonlinearities $p\in(1,\infty)$ into two parts such that when:
\begin{itemize}
	\item $1 < p <p^{Fuj}$(sub-critical case), there exist arbitrarily small Cauchy data, such that there exists no global (in time) weak solution, only local existence can be proved (so-called blow-up result). 
	\item $p^{Fuj}<p<\infty$(super-critical case), there exist unique global (in time) small data Sobolev or energy solutions.
\end{itemize} 
The critical case $p=p^{Fuj}$ belongs either to the set of global existence or blow-up according to the regularity of initial data. Let us now review some previous critical exponents to the above mentioned evolution equations when the usual power nonlinearity $|u(t,x)|^{p}$ is applied as a source term for all $p>1$.

 It has been shown that the critical exponent remains the same in both models (\ref{1.1}) and (\ref{1.2}) and is given by

\begin{equation}\label{1.11}
p_{1}^{Fuj}(n,\sigma,m)=1+\frac{2m\sigma}{n}, \ \ m\in [1,2], \ n\geq 1,
\end{equation}
where the parameter $m$ denotes the additional $L^{m}$ regularity of initial data $v_{0}, u_{1} \in L^{m}\cap L^{2}$. 

The critical exponent to model (\ref{1.3}) is given by 
\begin{equation}\label{1.12}
p_{2}^{Fuj}(n,\sigma, \delta)=1+\frac{2\sigma}{n-2\delta},\ \ n>2\delta,
\end{equation}
provided that $u_{1} \in L^{1}\cap L^{2}$. The critical exponent to the special model (\ref{1.4}) is given by 
\begin{equation}\label{1.13}
p_{2}^{Fuj}(n,\sigma, \sigma/2)=1+\frac{2\sigma}{n-\sigma},\ \ n\geq 1.
\end{equation}
Now, back to the models (\ref{1.6}) and (\ref{1.9}). In these two cases, the authors cannot determine the exact critical exponent for all space dimension as in the diffusive case, but there are some positive partial results at least in low space dimension.

Let us begin with (\ref{1.6}), indeed, using $L^{q}$-energy estimates $(q \neq 2)$ together with Banach fixed point theorem and test function method, the authors in \cite{ebert-Lourenco} successfully proved that the critical exponent is still given by (\ref{1.13}) only at low space dimension $1<\sigma<n\leq2\sigma$, however, in the case $n> 2\sigma$, the authors said that we may have a gap. 

The situation is more difficult in model (\ref{1.9}). In fact, on the one hand, if we use $L^{2}$-energy estimates together with Banach fixed point theorem we get that the global (in time) existence holds for any $p$ strictly larger than $p_{0}$, where
$$p_{0}:=p_{0}(n,\sigma,\delta)=1+\frac{\sigma+2\delta}{n-\sigma}, \ \ \delta\in \left(\frac{\sigma}{2},\sigma\right).$$ 

On the other hand, if we use a test function method, we prove a  blow up in finite time of weak solutions for any $p$ strictly smaller than $p_{2}^{Fuj}(n,\sigma, \sigma/2)$.
 
These techniques show a gap between non-existence and existence results in the following interval
$$\left(1+\frac{2\sigma}{n-\sigma}, 1+\frac{\sigma+2\delta}{n-\sigma}\right).$$ 

Fortunately, the authors in \cite{dabbicco-ebert-critical exponent,dabbicco-ebert-lp-lq} have succeeded recently derived sharp auxiliary $L^{q}$-estimates and found the critical exponent given by $p_{2}^{Fuj}(n,\sigma, \sigma/2)$ but again at low space dimension  $1<\sigma<n<\bar{n}(\sigma)$ as in model (\ref{1.6}), where,
$$\bar{n}(\sigma)=(3\sigma-2)\left[ 1+\frac{1}{2}\left( \sqrt{1+8\sigma(3\sigma-2)^{-2}}-1\right) \right] \in (3\sigma-2, 3\sigma-1). $$ 

Thus, the gap in the above interval was partially closed. Of course, this result is reasonable thanks to the presence of the same complex Fourier multiplier $|\xi|^{-\sigma}e^{\pm i|\xi|^{\sigma}t}$ in all models (\ref{1.4}), (\ref{1.6}) and (\ref{1.9}) which really leads to the unified sub-critical case $p_{2}^{Fuj}(n,\sigma, \sigma/2)$.

Now, inspired by those interesting considerations \cite{dabbicco-ebert-lp-lq,dabbicco-ebert-critical exponent} we are motivated to ask the following question:

\begin{question}
	What happens if the scaling argument of the diffusive part is better than the oscillatory one?
\end{question}  

To answer this question, we will focus our attention to derive some $L^{2}$-energy estimates and investigate both global (in time) existence of small data solution as well as a blow up (in finite time) of weak solution to the special model (\ref{1.8}). We found a new gap, similar to the one in previous model (\ref{1.9}) and we have a second question:

\begin{question}
	Can this gap be closed, at least in low or high dimensions, as in (\ref{1.9})?
\end{question}

Now, we are in position to state our principal results in the following section and prove them in the next Sections \ref{section 3}, \ref{section 4}. We will choose $u(0,x)=0$ for brevity.
\section{Principal Results}\label{section 2}

Our first principal result concerning the global (in time) existence of small data solutions reads as follows.
\begin{theorem}\label{GlobalExistence1}
	Let $t\in \mathbb{R}_{+}$, $x\in \mathbb{R}^{n}$, $\sigma\geq 1$, $\delta\in \left(0, \frac{\sigma}{2}\right) $ and $p>1$. We consider the following special semi-linear Cauchy problem of wave and beam equations with structural damping and additional dispersion term
	\begin{equation}\label{2.1} 
	\begin{array}{lll}
	u_{tt}(t,x)+(-\Delta)^{\sigma}u(t,x)+(-\Delta)^{2\delta}u(t,x)+2(-\Delta)^{\delta}u_{t}(t,x)=|u(t,x)|^{p},
	\hfill&
	&\cr
	\\ \hspace{3cm}
	u(0,x)=0, \ \ u_{t}(0,x)=u_{1}(x). & 
	\end{array}  
	\end{equation}
	
	Let us assume suitable restrictions for the nonlinearity $p$ and the dimension $n$
	\begin{equation}\label{2.2} 
	\begin{array}{lll}
	2\leq p \leq \frac{n}{n-2\sigma} \hfill& {if } \ 2\sigma<n\leq 4\sigma,
	&\cr
	\\
	2\leq p < \infty & {if }\ 
	\sigma<n \leq 2\sigma.
	\end{array}  
	\end{equation}
	Moreover, we suppose
	\begin{equation}\label{2.3}  
	p>1+\frac{\sigma+2\delta}{n-\sigma}.
	\end{equation}\\
	Then, there exists a constant $\varepsilon_0>0$ such that for any small initial velocity 
	$ u_{1}\in L^{1}\cap L^{2} $\ with  $\|u_{1}\|_{L^{1}\cap L^{2}}<\varepsilon_0,$ 
	we have a uniquely determined globally (in time) solution to (\ref{2.1}) in the class
	$$u\in\mathcal{C}\left([0,\infty), H^{\sigma}\right)\cap \mathcal{C}^{1}\left([0,\infty),L^{2}\right).$$
  In addition, the unique solution and its energy satisfy the long time decay estimates as $t\to \infty$:
	\begin{align*}
	\|u(t,\cdot)\|_{L^{2}} &\lesssim (1+t)^{-\frac{n-2\sigma}{4\delta}}\|u_{1}\|_{L^{1}\cap L^{2}},
	\\ 
	\|(-\Delta)^{\sigma/2}u(t,\cdot)\|_{L^{2}} &\lesssim (1+t)^{-\frac{n}{4\delta}}\|u_{1}\|_{L^{1}\cap L^{2}},
	\\ 
	\|u_{t}(t,\cdot)\|_{L^{2}}&\lesssim(1+t)^{-\frac{n-(2\sigma-4\delta)}{4\delta}}\|u_{1}\|_{L^{1}\cap L^{2}}.
	\end{align*}
\end{theorem}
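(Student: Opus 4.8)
The plan is to run the standard contraction-mapping scheme adapted to the decay structure dictated by the Fourier multiplier $m(t,\xi)$ at low frequencies together with the exponential-type decay at high frequencies. First I would fix the solution space
$$
X(t)=\mathcal{C}\bigl([0,t],H^{\sigma}\bigr)\cap\mathcal{C}^{1}\bigl([0,t],L^{2}\bigr),
$$
equipped with the weighted norm
$$
\|u\|_{X(t)}=\sup_{0\le\tau\le t}\Bigl((1+\tau)^{\frac{n-2\sigma}{4\delta}}\|u(\tau,\cdot)\|_{L^{2}}+(1+\tau)^{\frac{n}{4\delta}}\|(-\Delta)^{\sigma/2}u(\tau,\cdot)\|_{L^{2}}+(1+\tau)^{\frac{n-(2\sigma-4\delta)}{4\delta}}\|u_{t}(\tau,\cdot)\|_{L^{2}}\Bigr),
$$
and I would write the solution via Duhamel's formula $u=u^{\mathrm{lin}}+\mathcal{N}u$, where $u^{\mathrm{lin}}$ solves the linear homogeneous problem with data $(0,u_1)$ and $(\mathcal{N}u)(t)=\int_{0}^{t}E_{1}(t-\tau)\,|u(\tau,\cdot)|^{p}\,d\tau$, with $E_{1}(t)$ the propagator whose symbol is $m(t,\xi)$ (matched to the elliptic high-frequency part). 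The key auxiliary input is the set of $(L^{1}\cap L^{2})\to L^{2}$ and $L^{2}\to L^{2}$ linear estimates for $E_{1}(t)$ and its derivatives: splitting $\xi$ into $|\xi|\le 1$ and $|\xi|\ge 1$, on the low zone one uses $|e^{-|\xi|^{2\delta}t\pm i|\xi|^{\sigma}t}|=e^{-|\xi|^{2\delta}t}$ so the oscillation is harmless for $L^2$-norms and the decay is governed purely by the heat-type kernel $e^{-|\xi|^{2\delta}t}$, giving the rates $(1+t)^{-\frac{n-2\sigma+2k\sigma}{4\delta}}$ after accounting for the $|\xi|^{-\sigma}$ and the $|\xi|^{k\sigma}$ weights; on the high zone the characteristic roots have real part bounded below, yielding exponential decay that dominates any polynomial gain.

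With these linear estimates in hand, the core of the argument is the nonlinear estimate. I would show $\mathcal{N}:X(t)\to X(t)$ is well defined with $\|\mathcal{N}u\|_{X(t)}\lesssim\|u\|_{X(t)}^{p}$ and $\|\mathcal{N}u-\mathcal{N}v\|_{X(t)}\lesssim\bigl(\|u\|_{X(t)}^{p-1}+\|v\|_{X(t)}^{p-1}\bigr)\|u-v\|_{X(t)}$, using $\bigl||u|^{p}-|v|^{p}\bigr|\lesssim\bigl(|u|^{p-1}+|v|^{p-1}\bigr)|u-v|$ and Hölder. To estimate $\||u(\tau,\cdot)|^{p}\|_{L^{1}\cap L^{2}}=\|u(\tau,\cdot)\|_{L^{p}}^{p}+\|u(\tau,\cdot)\|_{L^{2p}}^{p}$, one interpolates $L^{p}$ and $L^{2p}$ between $L^{2}$ and the Sobolev space $H^{\sigma}$ via Gagliardo--Nirenberg; this is precisely where the dimensional restrictions \eqref{2.2} enter — they guarantee that the interpolation exponents are admissible, i.e. that $p$ and $2p$ lie in the range for which $L^{q}$ is controlled by $\|u\|_{L^{2}}^{1-\theta}\|(-\Delta)^{\sigma/2}u\|_{L^{2}}^{\theta}$ with $\theta\in[0,1]$, hence the requirement $2\le p$ and the upper bound $p\le\frac{n}{n-2\sigma}$ when $2\sigma<n\le4\sigma$. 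Plugging the decay rates from the definition of $\|u\|_{X(t)}$ into these Gagliardo--Nirenberg bounds produces a factor $(1+\tau)^{-\alpha(p)}$ inside the Duhamel integral, and the convolution with the linear kernel's decay must be integrable in $\tau$ up to the final time uniformly; the condition \eqref{2.3}, namely $p>1+\frac{\sigma+2\delta}{n-\sigma}$, is exactly what makes the relevant time integral converge (it forces the total power of $(1+\cdot)$ to exceed $1$ in the worst term), closing the estimate with a constant independent of $t$.

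Finally, choosing $\varepsilon_{0}$ small enough that $C\varepsilon_{0}+C\varepsilon_{0}^{p}\le\varepsilon_{0}$ and $p C(2\varepsilon_{0})^{p-1}<1$, the map $u\mapsto u^{\mathrm{lin}}+\mathcal{N}u$ is a contraction on the ball of radius $\varepsilon_{0}$ in $X(\infty)$, so Banach's fixed point theorem gives the unique global solution, and the embedding of $X(\infty)$ into $\mathcal{C}([0,\infty),H^{\sigma})\cap\mathcal{C}^{1}([0,\infty),L^{2})$ together with the definition of the weighted norm yields the stated decay estimates. The main obstacle I anticipate is the nonlinear estimate under the constraints \eqref{2.2}–\eqref{2.3}: one has to track carefully that the oscillatory phase in $m(t,\xi)$ contributes nothing beyond the pure diffusion decay $e^{-|\xi|^{2\delta}t}$ (so no extra regularity is bought from oscillation, unlike genuine wave-type estimates), and then verify that the Gagliardo--Nirenberg powers combine with this purely parabolic decay so that the Duhamel time integral is uniformly bounded — the loss of $2\sigma$ powers in the $L^{2}$ decay rate $(1+t)^{-\frac{n-2\sigma}{4\delta}}$ (as opposed to $(1+t)^{-\frac{n}{4\delta}}$ in a true diffusion equation) is the reason \eqref{2.3} is stronger than the naive Fujita-type threshold and is the delicate point to get right.
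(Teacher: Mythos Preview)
Your proposal is correct and follows essentially the same route as the paper: the same weighted space $X(T)$ and norm, the same linear $(L^{1}\cap L^{2})\to L^{2}$ and $L^{2}\to L^{2}$ estimates obtained by ignoring the oscillatory phase on the low-frequency zone, Gagliardo--Nirenberg interpolation to control $\|u\|_{L^{p}}^{p}$ and $\|u\|_{L^{2p}}^{p}$ under the restriction \eqref{2.2}, and Banach's fixed point theorem closed by the integrability condition \eqref{2.3}. The only implementation detail the paper makes more explicit is the standard splitting of the Duhamel integral into $[0,t/2]$ and $[t/2,t]$, applying the $(L^{1}\cap L^{2})\to L^{2}$ estimate on the first piece and the $L^{2}\to L^{2}$ estimate on the second, which is exactly what your phrase ``the convolution with the linear kernel's decay must be integrable in $\tau$'' would unpack into.
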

\begin{remark}
	The condition (\ref{2.2}) is technical due to the application of fractional Gagliardo-Nirenberg inequality from Lemma (\ref{FGN}), but it can be extended by relying on more general $L^{q}-L^{r}$ estimates with $1\leq q \leq r \leq \infty$. The lower bound (\ref{2.3}) is of important interest because it ensures that there is no loss of decay estimates for semi-linear problem with those for solutions to the corresponding linear Cauchy problem.
\end{remark} 

\begin{remark}
	The lower bound (\ref{2.3}) for global existence appears to combine the parameters of diffusion and oscillation observed in the Fourier multiplier
	$$m(t,\xi)=\frac{e^{-|\xi|^{2\delta}t} e^{\pm i|\xi|^{\sigma}t}}{2i|\xi|^{\sigma}}.$$
	But we are going to show that the upper bound for the blow up result do not depend at all on the parameter $\sigma$.
\end{remark}

Our second principal result concerning the blow up of weak solution is reads as follows.

\begin{theorem}
	Let $\sigma\geq 1$, $\delta \in (0, \frac{\sigma}{2})$ and $n> 2\delta$. We assume $u_{0}=0$ and $u_{1} \in L^{1}$ satisfies the following strict positivity:
	\begin{equation*}\label{..}
	\int_{\mathbb{R}^{n}} u_{1}(x)dx>0.
	\end{equation*}
	Additionally, we suppose the condition
	\begin{equation}\label{2.4}
	1<p<1+\frac{4\delta}{n-2\delta}.
	\end{equation}
	Then, there is no global (in time) weak solution to (\ref{2.1}).	
\end{theorem}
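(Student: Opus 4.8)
\medskip
\noindent\textbf{Sketch of the intended proof.} We would run a \emph{rescaled test-function} (Mitidieri--Pohozaev--Zhang) argument. Assume for contradiction that a global weak solution $u$ exists. Testing \eqref{2.1} against $\varphi\in C_0^\infty([0,\infty)\times\mathbb R^n)$, integrating by parts in $t$, moving the self-adjoint fractional powers onto $\varphi$, and discarding every boundary term at $t=0$ except the one carrying $u_1$ (legitimate since $u_0=0$), one is led to the weak identity
\[
\int_0^\infty\!\!\int_{\mathbb R^n}|u|^p\varphi\,dx\,dt+\int_{\mathbb R^n}u_1\,\varphi(0,\cdot)\,dx
=\int_0^\infty\!\!\int_{\mathbb R^n}u\,\Bigl(\varphi_{tt}+(-\Delta)^{\sigma}\varphi+(-\Delta)^{2\delta}\varphi-2(-\Delta)^{\delta}\varphi_t\Bigr)dx\,dt .
\]

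The next step is to insert the cut-off adapted to the \emph{anomalous-diffusion parabolic scaling} $\partial_t\sim(-\Delta)^{\delta}$, namely
\[
\varphi_R(t,x)=\eta\bigl(R^{-2\delta}t\bigr)^{\ell}\,\Phi\bigl(R^{-1}x\bigr)^{\ell},\qquad R\ge 1,\ \ell\gg 1,
\]
with $\eta,\Phi$ smooth, $\eta(0)=1$, $\eta\equiv1$ near $0$, $\Phi\equiv1$ near $0$. Under this scaling $\varphi_{tt}$, $(-\Delta)^{2\delta}\varphi_R$ and $(-\Delta)^{\delta}(\varphi_R)_t$ are all $O(R^{-4\delta})$ on a set of space--time measure $\simeq R^{\,n+2\delta}$, whereas $(-\Delta)^{\sigma}\varphi_R=O(R^{-2\sigma})$; since $\delta<\sigma/2$, i.e.\ $2\sigma>4\delta$, the $(-\Delta)^{\sigma}$ contribution is of strictly lower order and vanishes as $R\to\infty$ --- this is exactly why the blow-up range will not see $\sigma$. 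Writing $I_R:=\int_0^\infty\!\int|u|^p\varphi_R$ (finite as $u\in L^p_{\mathrm{loc}}$) and applying Hölder in $(t,x)$ with exponents $p,p'$, we expect to bound the right-hand side by $C\,I_R^{1/p}\bigl(\int|\text{stuff}|^{p'}\varphi_R^{-p'/p}\bigr)^{1/p'}\lesssim C\,I_R^{1/p}\,R^{-4\delta}R^{(n+2\delta)/p'}$, the weight $\varphi_R^{-p'/p}$ being harmless once $\ell$ is chosen large. This gives $I_R^{1/p'}\le CR^{(n+2\delta)/p'-4\delta}$, hence $I_R\le C R^{\,n+2\delta-4\delta p'}$.

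Since $n+2\delta-4\delta p'<0$ if and only if $p'>\tfrac{n+2\delta}{4\delta}$, which rearranges to $p<1+\tfrac{4\delta}{n-2\delta}$, assumption \eqref{2.4} forces $I_R\to0$. Plugging this back, the full right-hand side of the identity is $O\bigl(I_R^{1/p}R^{(n+2\delta)/p'-4\delta}\bigr)=O\bigl(R^{\,n+2\delta-4\delta p'}\bigr)\to0$; while, since $u_1\in L^1$ and $\int u_1>0$, dominated convergence gives $\int u_1\,\varphi_R(0,\cdot)\,dx=\int u_1\,\Phi(R^{-1}x)^{\ell}\,dx\to\int u_1>0$. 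Letting $R\to\infty$ in the identity then yields $\int u_1\le0$, a contradiction, so no global weak solution can exist.

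The part we expect to cost the most work is making the fractional operators act rigorously on $\varphi_R$. If $\sigma$ and $2\delta$ are integers this is purely local and classical, but in general $(-\Delta)^{s}\varphi_R$ is not compactly supported. One then has to either enlarge the admissible test class to weights of prescribed polynomial decay (assuming the matching weighted local integrability of $|u|^p$), or keep $\Phi$ of the form $\langle\,\cdot\,\rangle^{-\kappa}$ and prove, via the singular-integral representation of $(-\Delta)^{s}$ (composed with the integer Laplacian when $s\ge1$), pointwise bounds $|(-\Delta)^{s}\varphi_R(t,x)|\lesssim R^{-2s}w_R(x)$ for $s\in\{\delta,2\delta,\sigma\}$, with a single weight $w_R$ for which the three Hölder integrals $\int w_R^{p'}\varphi_R^{-p'/p}\,dx\lesssim R^{\,n}$ all converge; this constrains the pair $(\kappa,\ell)$ but is by now routine for structurally damped equations with fractional dissipation. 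With those estimates available, the scaling bookkeeping above goes through unchanged, with dominant order $R^{-4\delta}$.
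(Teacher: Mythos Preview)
Your proposal is correct and follows essentially the same route as the paper: a rescaled test-function argument with time scale $R^{2\delta}$, the observation that the $(-\Delta)^{\sigma}$ contribution is of lower order $R^{-2\sigma}\ll R^{-4\delta}$ since $2\delta<\sigma$, and the resulting exponent balance $n+2\delta-4\delta p'<0\Longleftrightarrow p<1+\tfrac{4\delta}{n-2\delta}$. The only cosmetic difference is that the paper takes the polynomial weight $\phi(x)=\langle x\rangle^{-(n+2\delta)}$ from the outset (with pointwise bounds $|(-\Delta)^{s}\langle x\rangle^{-\nu}|\lesssim\langle x\rangle^{-n-2\{s\}}$ supplied by a preparatory lemma) rather than starting from a compactly supported $\Phi^{\ell}$ and then switching to such weights as you anticipate in your last paragraph; and the paper closes with the one-line inequality $A y^{1/p}-y\le A^{p'}$ instead of your equivalent two-step ``$I_R\to0$, then plug back'' argument.
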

\begin{remark}
We avoid discussing the case $p=1+\frac{4\delta}{n-2\delta}$, because the main purpose of this research is just to understand the gap between the exponents of the existence and non-existence results.
\end{remark}
\section{Proof of Global Existence to (\ref{2.1})}\label{section 3}
In order to prove this result, let us collect some needed notations and inequalities. 

The notation $f\lesssim g$ is used to denotes that there exists a constant $c>0$ such that $f \leq cg$, while $f\approx g$ to denotes $g\lesssim f\lesssim g$, these constants have no significance in our analysis.

 The spaces $H^{s}(\mathbb{R}^{n})$ with $s>0$ denote Sobolev spaces of fractional order as defined below
$$H^{s}(\mathbb{R}^{n}):=\left\lbrace f\in S'(\mathbb{R}^{n}): \|f\|_{H^{s}(\mathbb{R}^{n})}=\|(1+|\cdot|^{2})^{\frac{s}{2}}\mathcal{F}(f)(\cdot)\|_{L^{2}(\mathbb{R}^{n})}<\infty\right\rbrace,$$
see \cite[p445]{EbertReissig} for more detail and reference therein.
\subsection{Useful Inequalities}
The fractional Gagliardo-Nirenberg inequality is expressed in the next lemma.
\begin{lemma}\label{FGN}
	Let $1<q<\infty$, $s>0$. Then, the following fractional Gagliardo-Nirenberg inequality holds for all $g\in H^{s}(\mathbb{R}^{n})$
	\[\|g\|_{L^{q}(\mathbb{R}^{n})}\lesssim \|(-\Delta)^{s/2}g\|_{L^{2}(\mathbb{R}^{n})}^{\theta_{q}}\,\|g\|_{L^{2}(\mathbb{R}^{n})}^{1-\theta_{q}},\]
	where \[\theta_{q}=\frac{n}{s}\left(\frac{1}{2}-\frac{1}{q}\right)\in\left[0,1\right].\]
\end{lemma}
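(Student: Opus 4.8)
The plan is to obtain the inequality by reducing it, via a scaling normalization, to the single \emph{endpoint} fractional Sobolev embedding and then filling in the intermediate exponents by elementary Hölder interpolation; a Littlewood--Paley argument will serve as a unified fallback that also covers the low-dimensional regime. First I would record the scaling constraint: applying the claimed bound to the dilate $g_\lambda(x):=g(\lambda x)$ and using $\|g_\lambda\|_{L^r}=\lambda^{-n/r}\|g\|_{L^r}$ together with $\|(-\Delta)^{s/2}g_\lambda\|_{L^2}=\lambda^{s-n/2}\|(-\Delta)^{s/2}g\|_{L^2}$, the powers of $\lambda$ on the two sides agree for every $\lambda>0$ exactly when $-\tfrac{n}{q}=\theta_q\bigl(s-\tfrac{n}{2}\bigr)-(1-\theta_q)\tfrac{n}{2}$, which rearranges to $\theta_q=\tfrac{n}{s}\bigl(\tfrac12-\tfrac1q\bigr)$. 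This confirms the stated exponent and shows it is forced, so it suffices to prove the estimate for this one value of $\theta_q$.

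Next I would establish the endpoint case $\theta_q=1$, that is $\tfrac1q=\tfrac12-\tfrac{s}{n}$ with $q^{*}:=\tfrac{2n}{n-2s}$ and $n>2s$, in the homogeneous form $\|g\|_{L^{q^{*}}}\lesssim\|(-\Delta)^{s/2}g\|_{L^2}$. The cleanest route writes $g=I_s h$ with $h:=(-\Delta)^{s/2}g$ and $I_s$ the Riesz potential, $\widehat{I_s h}(\xi)=|\xi|^{-s}\hat h(\xi)$, and then invokes the Hardy--Littlewood--Sobolev inequality $\|I_s h\|_{L^{q^{*}}}\lesssim\|h\|_{L^2}$, which is valid precisely for $0<s<n/2$.

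For a general admissible exponent with $\theta_q\in(0,1)$ I would interpolate between $L^2$ and $L^{q^{*}}$. Writing $\tfrac1q=\tfrac{1-\lambda}{2}+\tfrac{\lambda}{q^{*}}$ and applying Hölder's inequality gives $\|g\|_{L^q}\le\|g\|_{L^2}^{1-\lambda}\|g\|_{L^{q^{*}}}^{\lambda}$; combining this with the endpoint embedding yields $\|g\|_{L^q}\lesssim\|g\|_{L^2}^{1-\lambda}\|(-\Delta)^{s/2}g\|_{L^2}^{\lambda}$, and inserting $\tfrac{1}{q^{*}}=\tfrac12-\tfrac{s}{n}$ shows $\lambda=\tfrac{n}{s}\bigl(\tfrac12-\tfrac1q\bigr)=\theta_q$, which is the assertion. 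The case $\theta_q=0$ (i.e. $q=2$) is trivial.

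The main obstacle is the regime $n\le 2s$, where $q^{*}$ is not finite and the Hardy--Littlewood--Sobolev endpoint is unavailable; there $\theta_q$ stays in $[0,\tfrac{n}{2s}]\subsetneq[0,1]$ for all finite $q$, and I would instead run a Littlewood--Paley decomposition $g=\sum_j P_j g$ into dyadic frequency blocks $|\xi|\sim 2^{j}$. Bernstein's inequality gives $\|P_j g\|_{L^q}\lesssim 2^{jn(\frac12-\frac1q)}\|P_j g\|_{L^2}$ for $q\ge 2$, while $\|(-\Delta)^{s/2}P_j g\|_{L^2}\approx 2^{js}\|P_j g\|_{L^2}$. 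Splitting the sum at a threshold $2^{j_0}$, bounding the low blocks by $\|P_j g\|_{L^2}\le\|g\|_{L^2}$ and the high blocks by $\|P_j g\|_{L^2}\lesssim 2^{-js}\|(-\Delta)^{s/2}g\|_{L^2}$, the two resulting geometric series converge precisely because $0<\theta_q<1$, and optimizing $j_0$ so that the two contributions balance (namely $2^{j_0 s}\approx\|(-\Delta)^{s/2}g\|_{L^2}/\|g\|_{L^2}$) reproduces the power $\theta_q$. This argument in fact settles every admissible triple $(n,s,q)$ with $\theta_q\in(0,1)$ in one stroke, so it can replace the interpolation step entirely; the only delicate point is the summability of the dyadic series, which is exactly where the hypothesis $\theta_q\in[0,1]$ is genuinely used.
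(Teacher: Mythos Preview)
Your argument is correct and follows a standard route: scaling to pin down $\theta_q$, the Hardy--Littlewood--Sobolev endpoint $\|I_s h\|_{L^{q^*}}\lesssim\|h\|_{L^2}$ for $n>2s$, H\"older interpolation for intermediate $\theta_q$, and a Littlewood--Paley splitting to handle $n\le 2s$. Note in passing that the hypothesis $\theta_q\ge 0$ already forces $q\ge 2$, so Bernstein in the direction $L^2\to L^q$ is indeed available throughout your dyadic argument, and the two geometric series converge exactly under $0<\theta_q<1$ as you say.

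The paper, however, does not supply its own proof of this lemma at all: it simply refers the reader to the monograph \cite{EbertReissig}. So there is no ``paper's approach'' to compare against; your write-up goes well beyond what the authors provide. If anything, your sketch is closer to the proofs one finds in the cited reference (which proceeds via Sobolev embeddings and interpolation in essentially the same spirit), so it is a faithful expansion rather than an alternative strategy.
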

\begin{proof}
	For the proof, see \cite{EbertReissig} and reference therein.
\end{proof}

The next integral inequality is used to deal with the Duhamel's integral. 
\begin{lemma}\label{Integral inequality}
	Let $a, b\in\mathbb{R}$ such that $\max\{a,b\}>1$. Then, it holds
	$$\int_{0}^{t}(1+t-\tau)^{-a}(1+\tau)^{-b}d\tau\lesssim (1+t)^{-\min\{a,b\}}.$$ 
\end{lemma}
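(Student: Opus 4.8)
The plan is to prove the estimate by the standard device of splitting the convolution integral at the midpoint $\tau=t/2$ and reducing, on each half, to the single point where the corresponding weight concentrates. First I would observe that the statement is symmetric under the substitution $\tau\mapsto t-\tau$, which interchanges the roles of $a$ and $b$ while leaving $\max\{a,b\}$ and $\min\{a,b\}$ unchanged. Hence there is no loss of generality in assuming $a=\max\{a,b\}>1$ and $b=\min\{a,b\}$, so that the target reduces to showing $\int_{0}^{t}(1+t-\tau)^{-a}(1+\tau)^{-b}\,d\tau\lesssim(1+t)^{-b}$. It also suffices to treat $t\geq 1$, since for $t\in[0,1]$ both the integral and the right-hand side are comparable to positive constants, so the desired bound is trivial there.

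Next I would split $\int_{0}^{t}=\int_{0}^{t/2}+\int_{t/2}^{t}$. On the first piece one has $t-\tau\geq t/2$, whence $1+t-\tau\approx 1+t$ and, as $a>0$, $(1+t-\tau)^{-a}\approx(1+t)^{-a}$; factoring this out leaves $(1+t)^{-a}\int_{0}^{t/2}(1+\tau)^{-b}\,d\tau$. On the second piece one has $\tau\geq t/2$, so $1+\tau\approx 1+t$ and therefore $(1+\tau)^{-b}\approx(1+t)^{-b}$ regardless of the sign of $b$; factoring this out and substituting $s=t-\tau$ leaves $(1+t)^{-b}\int_{0}^{t/2}(1+s)^{-a}\,ds$. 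Because $a>1$, the latter integral is bounded uniformly in $t$ by $\int_{0}^{\infty}(1+s)^{-a}\,ds=(a-1)^{-1}$, so the contribution of $[t/2,t]$ is already $\lesssim(1+t)^{-b}$, as needed.

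It remains to control the contribution of $[0,t/2]$, and this is the only delicate point, requiring a short case analysis in $b$. If $b>1$, then $\int_{0}^{t/2}(1+\tau)^{-b}\,d\tau$ is bounded, so this piece is $\lesssim(1+t)^{-a}\leq(1+t)^{-b}$ since $a\geq b$. If $b<1$, then $\int_{0}^{t/2}(1+\tau)^{-b}\,d\tau\approx(1+t)^{1-b}$, yielding a bound $\lesssim(1+t)^{1-a-b}\leq(1+t)^{-b}$, where the inequality is exactly the statement $a\geq 1$. The borderline case $b=1$ produces a logarithmic factor, handled by noting that $(1+t)^{-a}\log(1+t)\lesssim(1+t)^{-1}=(1+t)^{-b}$ because $a>1$. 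Adding the two pieces gives $\int_{0}^{t}(1+t-\tau)^{-a}(1+\tau)^{-b}\,d\tau\lesssim(1+t)^{-b}=(1+t)^{-\min\{a,b\}}$, which is the claim. I expect the interplay of the exponents $a$, $b$ in the $[0,t/2]$ estimate (in particular verifying that $a\geq 1$ and $a\geq b$ are exactly what is needed) to be the main thing to get right, while the rest is routine.
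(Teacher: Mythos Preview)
Your argument is correct and is precisely the standard midpoint-splitting proof of this convolution inequality. The paper itself does not give a proof of this lemma but merely cites \cite{EbertReissig}, where the same splitting argument appears; your write-up is essentially what one finds there, so there is nothing to compare.
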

\begin{proof}
	For the proof, see again \cite{EbertReissig} and reference therein.
\end{proof}

In the next subsection, we derive suitable energy estimates which are very important tool to demonstrate Theorem \ref{GlobalExistence1}.
\subsection{$L^{2}$-Energy Estimates}
	Let $t\in \mathbb{R}_{+}$, $x\in \mathbb{R}^{n}$, $\sigma\geq 1$ and $\delta\in \left(0, \frac{\sigma}{2}\right) $. We consider the corresponding linear equations to (\ref{2.1})
\begin{equation}\label{3.1} 
\begin{array}{lll}
u_{tt}(t,x)+2(-\Delta)^{\delta}u_{t}(t,x)+(-\Delta)^{2\delta}u(t,x)+(-\Delta)^{\sigma}u(t,x)=0, \ \ \ \hfill&
&\cr
\\ \hspace{3cm}
u(0,x)=0, \ \ u_{t}(0,x)=u_{1}(x). &.
\end{array}  
\end{equation}
The full symbol of the equation in (\ref{3.1}) is given after applying the space Fourier transform
$$\lambda^{2}+2|\xi|^{2\delta}\lambda+|\xi|^{4\delta}+|\xi|^{2\sigma}.$$

The characteristic roots are complex conjugate for all frequencies
$$\lambda_{1}(\xi)=-|\xi|^{2\delta}+i|\xi|^{\sigma}, \ \ \ \lambda_{2}(\xi)=-|\xi|^{2\delta}-i|\xi|^{\sigma},  \ \  \lambda_{1}(\xi)-\lambda_{2}(\xi)=2i|\xi|^{\sigma},$$
$$|\lambda_{1}(\xi)|^{2}=|\lambda_{2}(\xi)|^{2}=|\xi|^{4\delta}+|\xi|^{2\sigma}, \ \ |\lambda_{1}(\xi)-\lambda_{2}(\xi)|^{2}=4|\xi|^{2\sigma}.$$
We have the following $(L^{1}\cap L^{2})-L^{2}$ long time decay estimates.
\begin{proposition}\label{linear estimates}
	 The solution and its energy satisfy the following $(L^{1}\cap L^{2})-L^{2}$ estimates for all $n>2\sigma$
	 
	 	\begin{align*}
	 \|u(t,\cdot)\|_{L^{2}} &\lesssim (1+t)^{-\frac{n-2\sigma}{4\delta}}\|u_{1}\|_{L^{1}\cap L^{2}},
	 \\ 
	 \|(-\Delta)^{\sigma/2}u(t,\cdot)\|_{L^{2}} &\lesssim (1+t)^{-\frac{n}{4\delta}}\|u_{1}\|_{L^{1}\cap L^{2}},
	 \\ 
	 \|(-\Delta)^{\delta}u(t,\cdot)\|_{L^{2}}&\lesssim(1+t)^{-\frac{n-(2\sigma-4\delta)}{4\delta}}\|u_{1}\|_{L^{1}\cap L^{2}},
	 \\
	 \|u_{t}(t,\cdot)\|_{L^{2}}&\lesssim(1+t)^{-\frac{n-(2\sigma-4\delta)}{4\delta}}\|u_{1}\|_{L^{1}\cap L^{2}}.
	 \end{align*}
	In particular, if we remove the additional $L^{1}$ regularity, we have the $L^{2}-L^{2}$ estimates
	\begin{equation*}
	\|u(t,\cdot)\|_{L^{2}}\lesssim(1+t)\|u_{1}\|_{L^{2}}.
	\end{equation*}	
\end{proposition}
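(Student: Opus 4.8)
The plan is to work entirely on the Fourier side, where the solution to (\ref{3.1}) is $\widehat{u}(t,\xi)=m(t,\xi)\widehat{u_1}(\xi)$ with multiplier
$$m(t,\xi)=\frac{e^{\lambda_1(\xi)t}-e^{\lambda_2(\xi)t}}{\lambda_1(\xi)-\lambda_2(\xi)}=\frac{\sin(|\xi|^\sigma t)}{|\xi|^\sigma}\,e^{-|\xi|^{2\delta}t},$$
and to estimate $\|u(t,\cdot)\|_{L^2}$, $\|(-\Delta)^{\sigma/2}u(t,\cdot)\|_{L^2}$, $\|(-\Delta)^\delta u(t,\cdot)\|_{L^2}$ and $\|u_t(t,\cdot)\|_{L^2}$ by Plancherel. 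For $u_t$ one uses $\widehat{u_t}(t,\xi)=\partial_t m(t,\xi)\widehat{u_1}(\xi)$; a short computation gives $\partial_t m(t,\xi)=\big(\cos(|\xi|^\sigma t)-|\xi|^{2\delta-\sigma}\sin(|\xi|^\sigma t)\big)e^{-|\xi|^{2\delta}t}$, which is bounded by $C(1+|\xi|^{2\delta-\sigma})e^{-|\xi|^{2\delta}t}$. The standard first step is the dyadic splitting of frequency space into a low-frequency zone $Z_{\mathrm{low}}=\{|\xi|\le\varepsilon\}$ and a high/intermediate zone $Z_{\mathrm{high}}=\{|\xi|>\varepsilon\}$ for a small fixed $\varepsilon>0$, because the behavior of $m$ and of the relevant powers of $|\xi|$ differs qualitatively in the two zones.

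In $Z_{\mathrm{low}}$, since $\delta<\sigma/2$ we have $2\delta<\sigma$, so $|\sin(|\xi|^\sigma t)/|\xi|^\sigma|\le t$ is the crude bound but the genuinely useful one is $|m(t,\xi)|\lesssim |\xi|^{-\sigma}e^{-|\xi|^{2\delta}t}$ (using $|\sin|\le 1$), and similarly $|\,|\xi|^\sigma m|\lesssim e^{-|\xi|^{2\delta}t}$, $|\,|\xi|^{2\delta}m|\lesssim|\xi|^{2\delta-\sigma}e^{-|\xi|^{2\delta}t}$, $|\partial_t m|\lesssim |\xi|^{2\delta-\sigma}e^{-|\xi|^{2\delta}t}$. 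Then one applies the $(L^1\cap L^2)$–$L^2$ device: split $\widehat{u_1}=\mathbf 1_{Z_{\mathrm{low}}}\widehat{u_1}$ and bound the $L^2$-norm over $Z_{\mathrm{low}}$ by $\|\widehat{u_1}\|_{L^\infty}\le\|u_1\|_{L^1}$ times $\big(\int_{|\xi|\le\varepsilon}|\text{mult}|^2\,d\xi\big)^{1/2}$. Each of these Gauss-type integrals is evaluated by the change of variables $\eta=t^{1/2\delta}\xi$: for $\|u\|_{L^2}$ one gets $\int_{|\xi|\le\varepsilon}|\xi|^{-2\sigma}e^{-2|\xi|^{2\delta}t}\,d\xi\lesssim t^{-(n-2\sigma)/2\delta}$ for $n>2\sigma$ (this is where the dimensional restriction $n>2\sigma$ enters — the integral at $\xi=0$ must converge); for the $(-\Delta)^{\sigma/2}$ term $\int_{|\xi|\le\varepsilon}e^{-2|\xi|^{2\delta}t}\,d\xi\lesssim t^{-n/2\delta}$; and for the $(-\Delta)^\delta$ and $u_t$ terms $\int_{|\xi|\le\varepsilon}|\xi|^{2(2\delta-\sigma)}e^{-2|\xi|^{2\delta}t}\,d\xi\lesssim t^{-(n-(2\sigma-4\delta))/2\delta}$ (again needing $n>2\sigma-4\delta$, which is implied by $n>2\sigma$). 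Replacing $t$ by $1+t$ and bounding the small-$t$ regime by $\|u_1\|_{L^2}$ gives the $(1+t)^{-\cdots}$ form.

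In $Z_{\mathrm{high}}$, $|\xi|^{2\delta}\gtrsim\varepsilon^{2\delta}>0$, so $e^{-|\xi|^{2\delta}t}\le e^{-c\varepsilon^{2\delta}t}$ decays exponentially, dominating any polynomial loss; combining this with $|m|\lesssim|\xi|^{-\sigma}$, $|\,|\xi|^\sigma m|\lesssim1$, $|\,|\xi|^{2\delta}m|\lesssim|\xi|^{2\delta-\sigma}\lesssim1$, $|\partial_t m|\lesssim1$ (all uniformly on $Z_{\mathrm{high}}$ since $2\delta-\sigma<0$), the $L^2$-norm over $Z_{\mathrm{high}}$ of every quantity is bounded by $e^{-ct}\|\widehat{u_1}\mathbf 1_{Z_{\mathrm{high}}}\|_{L^2}\lesssim e^{-ct}\|u_1\|_{L^2}$, which is faster than the polynomial rates from $Z_{\mathrm{low}}$ and hence absorbed. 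Adding the two zones yields the four stated $(L^1\cap L^2)$–$L^2$ estimates. Finally, the bare $L^2$–$L^2$ bound $\|u(t,\cdot)\|_{L^2}\lesssim(1+t)\|u_1\|_{L^2}$ follows from the universal pointwise bound $|m(t,\xi)|=|\sin(|\xi|^\sigma t)|\,|\xi|^{-\sigma}e^{-|\xi|^{2\delta}t}\le\min\{t,|\xi|^{-\sigma}\}\le t\le 1+t$ valid for all $\xi$, so Plancherel gives the result with no frequency splitting needed.

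The main obstacle is the low-frequency analysis of the $L^2$-norm of $u$ itself: the factor $|\xi|^{-\sigma}$ coming from the oscillatory multiplier makes the Gaussian integral singular at the origin, and one has to check carefully that $n>2\sigma$ is exactly the condition under which $\int_{|\xi|\le\varepsilon}|\xi|^{-2\sigma}e^{-2|\xi|^{2\delta}t}\,d\xi$ both converges and scales like $t^{-(n-2\sigma)/2\delta}$ — with the scaling governed by the \emph{diffusive} exponent $2\delta$ rather than the oscillatory $\sigma$, which is the whole point of the ``new gap.'' The other estimates are structurally identical but milder, since their multipliers carry extra nonnegative powers of $|\xi|$ that only improve integrability at the origin.
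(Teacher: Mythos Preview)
Your proposal is correct and follows essentially the same route as the paper: represent the solution via the explicit Fourier multiplier, split into low and high frequencies, use $\|\widehat{u_1}\|_{L^\infty}\le\|u_1\|_{L^1}$ together with the scaling lemma $\||\xi|^{a}e^{-t|\xi|^{b}}\|_{L^{2}(|\xi|<1)}\lesssim (1+t)^{-(n+2a)/2b}$ in the low-frequency zone, and exponential decay in the high-frequency zone. Your write-up is in fact more detailed than the paper's (which only spells out the $\|u\|_{L^2}$ case and refers to \cite{pham-reissig-kainan} for the pattern), and your derivation of the $L^2$--$L^2$ bound via $|m(t,\xi)|\le t$ is exactly the expected argument.
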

\begin{remark}
	The condition $n>2\sigma$ guarantees the decay property in the solution itself. We remark the equivalence between decay rates of the kinetic energy and elastic energy of order $\delta$. The kinetic energy satisfies worse decay rate with respect to elastic energy of order $\sigma/2$. If $\sigma<2\delta$, we have the same result as in \cite[Proposition 24]{pham-reissig-kainan}, but in this case, it is remarkable that the situation is opposite, that is, the kinetic energy satisfies the same decay rate with respect to elastic energy of order $\sigma/2$, and worse decay rate with respect to elastic energy of order $\delta$.
\end{remark}
\begin{proof}
	The proof of this proposition follows the same direct detailed computations found in \cite[Proposition 24]{pham-reissig-kainan} with small modifications due to the condition $2\delta<\sigma$.
	
	The explicit representation of solution to the linear equation is given by
	$$\hat{u}(t,\xi)=\frac{e^{t\lambda_{1}(\xi)}-e^{t\lambda_{2}(\xi)}}{\lambda_{1}(\xi)-\lambda_{2}(\xi)}\hat{u_{1}}(\xi)=\frac{e^{-|\xi|^{2\delta}t+i|\xi|^{\sigma}t}-e^{-|\xi|^{2\delta}t-i|\xi|^{\sigma}t}}{2i|\xi|^{\sigma}}\hat{u_{1}}(\xi), \ \ \ \xi\in \mathbb{R}^{n}, \ \ t>0.$$
	The condition $2\delta<\sigma$ shows that the scaling $\xi t^{1/2\delta} \to \tilde{\xi}$ is better than $\xi t^{1/\sigma} \to \tilde{\xi}$.
	
	 Using the following inequality
	$$\||\xi|^{a}e^{-t|\xi|^{b}}\|_{L^{2}(|\xi|<1)}\lesssim (1+t)^{-\frac{n+2a}{2b}}, \ \ b>0, \ n+2a>0,$$
	together with Persoval identity
	$$\|u(t,\cdot)\|_{L^{2}(\mathbb{R}^{n})}:=\|\hat{u}(t,\cdot)\|_{L^{2}(\mathbb{R}^{n})}$$
	give us the desired result
	$$\|\hat{u}(t,\cdot)\|_{L^{2}(|\xi|<1)}\lesssim \||\xi|^{-\sigma}e^{-t|\xi|^{2\delta}}\|_{L^{2}(|\xi|<1)}\|\hat{u_{1}}\|_{L^{\infty}}\lesssim(1+t)^{-\frac{n-2\sigma}{4\delta}}\|u_{1}\|_{L^{1}},$$
	$$\|\hat{u}(t,\cdot)\|_{L^{2}(|\xi|>1)}\lesssim \||\xi|^{-\sigma}e^{-t|\xi|^{2\delta}}\|_{L^{\infty}(|\xi|>1)}\|\hat{u_{1}}\|_{L^{2}}\lesssim e^{-ct}\|u_{1}\|_{L^{2}}.$$
\end{proof}

Let us now devote our attention to prove first principal result.
\begin{proof}	
	Since we are dealing with semi-linear Cauchy problems, we use the Banach's fixed point theorem inspired from the book \cite[Page 303]{EbertReissig}. Here, we need to define a family of evolution spaces $X(T)$ for any $T>0$ with suitable norm $\|\cdot\|_{X(T)}$, as well as an operator $$O :u\in X(T)\longmapsto Ou(t,x)=K(t,x)\ast u_{1}(x)+  \int_{0}^{t}K(t-\tau,x)\ast |u(\tau,x)|^{p}d\tau$$
	$$=\int_{\mathbb{R}^{n}}K(t,x-y)u_{1}(y)dy+\int_{0}^{t}\int_{\mathbb{R}^{n}}K(t-\tau,x-y)|u(\tau,y)|^{p}dyd\tau=u^{L}(t,x)+u^{N}(t,x),$$
	where 
	$$K(t,x)=\mathcal{F}^{-1}\left(\frac{e^{-|\xi|^{2\delta}t+i|\xi|^{\sigma}t}-e^{-|\xi|^{2\delta}t-i|\xi|^{\sigma}t}}{2i|\xi|^{\sigma}} \right)(t,x). $$
	If the operator $O$ satisfies the two inequalities: 
	\begin{align}
	\|Ou\|_{X(T)} &\lesssim \left\|u_{1}\right\|_{L^{1}(\mathbb{R}^{n})\cap L^{2}(\mathbb{R}^{n})}+\|u\|_{X(T)}^{p}, \ \  \forall u \in X(T), \label{3.2} \\ 
	\|Ou-Ov\|_{X(T)} &\lesssim \|u-v\|_{X(T)}\Big( \|u\|_{X(T)}^{p-1}+\|v\|_{X(T)}^{p-1}\Big), \ \  \forall u, v \in X(T), \label{3.3} 
	\end{align} 
	then, one can deduce the existence and uniqueness of a global (in time) solutions of (\ref{1.5}) for small norm of initial data. Here, the smallness of the initial data $\left\|u_{1}\right\|_{L^{2}(\mathbb{R}^{n})\cap L^{1}(\mathbb{R}^{n})}<\varepsilon_0$ imply that the operator $O$ maps balls of $X(T)$ into balls of $X(T)$.
	
	Now, we define the Banach space $X(T)$ for all $T>0$ as follows: 
	$$
	X(T):=\mathcal{C}\left([0,T], H^{\sigma}\right)\cap \mathcal{C}^{1}\left([0,T],L^{2}\right),
	$$
	we equip it with the norm 
	\begin{align}
	\|u\|_{X(T)}&=\sup_{0\leq t\leq T}\Big((1+t)^{\frac{n-2\sigma}{4\delta}}\|u(t,\cdot)\|_{L^{2}}+(1+t)^{ \frac{n}{4\delta}}\|(-\Delta)^{\sigma/2}u(t,\cdot)\|_{L^{2}} \nonumber \\
	&\hspace{3cm} 
	+(1+t)^{\frac{n-(2\sigma-4\delta)}{4\delta}}\|u_{t}(t,\cdot)\|_{L^{2}}\Big). \label{3.4}
	\end{align}
	The definition of this norm plays a crucial role in our analysis.
	
	\textit{Step 1:} Using energy estimates, it is clear that the function $$u^{L}(t,x)=K(t,x)\ast u_{1}(x)=\int_{\mathbb{R}^{n}}K(t,x-y)u_{1}(y)dy$$ belongs to $X(T)$ and we have
	\begin{align*}
	\|u^{L}\|_{X(T)}&=\sup_{0\leq t\leq T}\Big((1+t)^{\frac{n-2\sigma}{4\delta}}\|u^{L}(t,\cdot)\|_{L^{2}}+(1+t)^{ \frac{n}{4\delta}}\|(-\Delta)^{\sigma/2}u^{L}(t,\cdot)\|_{L^{2}} \nonumber \\
	&\hspace{3cm} 
	+(1+t)^{-\frac{n-(2\sigma-4\delta)}{4\delta}}\|u_{t}^{L}(t,\cdot)\|_{L^{2}}\Big)\lesssim \left\|u_{1}\right\|_{L^{1}(\mathbb{R}^{n})\cap L^{2}(\mathbb{R}^{n})}.	\end{align*} 
	\textit{Step 2:} To conclude inequality (\ref{3.2}), we must prove
	\begin{equation}\label{3.5}
	\|u^{N}\|_{X(T)}\lesssim \|u\|_{X(T)}^{p}.
	\end{equation}
	We divide the interval $[0,t]$ into two sub-intervals $[0,t/2]$ and $[t/2,t]$ where we use the $L^{1}-L^{2}$ linear estimates if $\tau\in[0,t/2]$ and $L^{2}-L^{2}$ estimates if $\tau\in[t/2,t]$. From Proposition \ref{linear estimates} we have
	\begin{align}
	\|u^{N}(t,\cdot)\|_{L^{2}}&\lesssim\int_{0}^{t/2}(1+t-\tau)^{-\frac{n-2\sigma}{4\delta}} \left\||u(\tau,\cdot)|^{p}\right\| _{L^{1}\cap L^{2}} d\tau \nonumber \\
	&\hspace{3cm} 
	+\int_{t/2}^{t}(1+t-\tau)\left\||u(\tau,\cdot)|^{p}\right\| _{L^{2}}d\tau.  \label{3.11}
	\end{align}
	By the fractional Gagliardo-Nirenberg inequality from Lemma \ref{FGN}, we can estimate these two norms 
	$$\|u(\tau,\cdot)\|^{p}_{L^{2p}},\ \ \|u(\tau,\cdot)\|^{p}_{L^{p}}.$$
	In fact, we know from (\ref{3.2}) that
	\begin{equation*}
	(1+\tau)^{\frac{n}{4\delta}}\|(-\Delta)^{\sigma/2}u(\tau,\cdot)\|_{L^{2}}\lesssim\|u\|_{X(T)},
	\end{equation*}
	\begin{equation*}
	(1+\tau)^{\frac{n-2\sigma}{4\delta}}\|u(\tau,\cdot)\|_{L^{2}}\lesssim\|u\|_{X(T)}.
	\end{equation*}
	Hence, we can estimate the above norms as follows
	\begin{equation*}
	\left\|u(\tau,\cdot)\right\| _{L^{sp}}^{p}\lesssim(1+\tau)^{-p(\frac{n-\sigma}{2\delta})+\frac{n}{2s\delta} }\|u\|_{X(T)}^{p}, \ s=1, 2,
	\end{equation*}
	provided that the conditions (\ref{2.2}) are satisfied for $p$ and $n$, we conclude 
	\begin{equation*}
	\left\|u(\tau,\cdot)\right\|^{p}_{L^{p}}+\left\|u(\tau,\cdot)\right\|^{p}_{L^{2p}}\lesssim (1+\tau)^{-p(\frac{n-\sigma}{2\delta})+\frac{n}{2\delta} }\|u\|_{X(T)}^{p}. 
	\end{equation*}
	Using the following equivalences
	$$(1+t-\tau)\approx (1+t) \ \text{if} \ \tau \in[0,t/2],\  \ (1+\tau)\approx (1+t) \ \text{if} \ \tau \in[t/2,t] $$
	and Lemma \ref{Integral inequality}, we estimates the first integral of $u^{N}$ over $[0,t/2]$ as follows
	$$
	\int_{0}^{t/2}(1+t-\tau)^{-\frac{n-2\sigma}{4\delta}}(1+\tau)^{-p(\frac{n-\sigma}{2\delta})+\frac{n}{2\delta} }\|u\|_{X(T)}^{p}d\tau \lesssim(1+t)^{-\frac{n-2\sigma}{4\delta}}\|u\|_{X(T)}^{p}\int_{0}^{t/2}(1+\tau)^{-p(\frac{n-\sigma}{2\delta})+\frac{n}{2\delta} }d\tau$$
	$$
	\lesssim(1+t)^{-\frac{n-2\sigma}{4\delta}}\|u\|_{X(T)}^{p}
	$$
	due to $p>1+\frac{\sigma+2\delta}{n-\sigma}$. For the second integral over $[t/2,t]$ we have
	
	$$\int_{t/2}^{t}(1+t-\tau)(1+\tau)^{-p(\frac{n-\sigma}{2\delta})+\frac{n}{4\delta}}\|u\|_{X(T)}^{p}d\tau\lesssim(1+t)^{2-p(\frac{n-\sigma}{2\delta})+\frac{n}{4\delta} }\|u\|_{X(T)}^{p}.$$
	Thanks to $2\delta<\sigma$ and $p>1+\frac{\sigma+2\delta}{n-\sigma}$, we arrive to the desired estimate for $u^{N}$ 
	$$
	(1+t)^{\frac{n-2\sigma}{4\delta}}\|u^{N}(t,\cdot)\|_{L^{2}}\lesssim\|u\|_{X(T)}^{p}.
	$$
	The same conditions allow us to obtain
	$$ 
	(1+t)^{\frac{n-(2\sigma-4\delta)}{4\delta}}\|u_{t}^{N}(t,\cdot)\|_{L^{2}}\lesssim\|u\|_{X(T)}^{p},
	$$
	$$
	(1+t)^{\frac{n}{4\delta}}\|(-\Delta)^{\sigma/2}u^{N}(t,\cdot)\|_{L^{2}}\lesssim\|u\|_{X(T)}^{p}.
	$$ 
	Inequality (\ref{3.5}) is now proved, that is (\ref{3.2}).
	
	\textit{Step 3:} To prove (\ref{3.3}) we choose two elements $u$, $v$ belong to $X(T)$, and we write
	$$Ou(t,x)-Ov(t,x)=\int_{0}^{t}K(t-\tau,x)\ast (|u(\tau,x)|^{p}-|v(\tau,x)|^{p})d\tau.$$
	We divide the integral as above and by the H\"{o}lder's inequality, we derive for $s=1,2$ the following
	\begin{equation*}
	\left\||u(\tau,\cdot)|^{p}-|v(\tau,\cdot)|^{p}\right\|_{L^{s}}\leq \|u(\tau,\cdot)-v(\tau,\cdot)\|_{L^{sp}}\left(\|u(\tau,\cdot)\|_{L^{s p}}^{p-1}+\|v(\tau,\cdot)\|_{L^{sp}}^{p-1} \right).
	\end{equation*}
	Using the definition of the norm $\|u-v\|_{X(T)}$ and fractional Gagliardo-Nirenberg inequality we can prove (\ref{3.3}) without difficulty. Hence, Theorem \ref{GlobalExistence1} is proved.	
\end{proof}
\section{Proof of Blow up Result to (\ref{2.1})}\label{section 4}
The proof will follows the same detailed computations found in \cite[Theorem 2]{dao-reissig-blow up} with small modifications due to the additional dispersion term. Therefor, we will only present the steps in which we have a difference. We collect the following auxiliary lemmas whose proofs can be found in \cite{dao-reissig-blow up}. 

\begin{lemma}\label{lemma 4.1}
	Let $d \in \mathbb{N}$ and $s \in [0,1)$. Let $\langle x\rangle=(1+|x|^{2})^{1/2}$. Then, the following estimates hold for any $\nu>n$ and for all $x \in \mathbb{R}^{n}$:
	\begin{align*}
	\left|(-\Delta)^{d+s} \langle x\rangle^{-\nu}\right| \lesssim
	\begin{cases}
	\langle x\rangle^{-n-2d} & \text{if}\ \  s=0, \\
	\langle x\rangle ^{-n-2s} & \text{if}\ \  s \in (0,1).
	\end{cases}
	\end{align*}
\end{lemma}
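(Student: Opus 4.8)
The plan is to factor the operator as $(-\Delta)^{d+s}=(-\Delta)^{s}(-\Delta)^{d}$ and to treat the local integer part and the nonlocal fractional part separately, the key preliminary being a pointwise bound on all derivatives of $\langle x\rangle^{-\nu}$. First I would record the elementary estimate
$$\left|\partial^{\alpha}\langle x\rangle^{-\nu}\right|\lesssim \langle x\rangle^{-\nu-|\alpha|}\qquad\text{for every multi-index }\alpha,$$
proved by induction: since $\partial_{j}\langle x\rangle^{-\nu}=-\nu x_{j}\langle x\rangle^{-\nu-2}$ and $|x_{j}|\le\langle x\rangle$, each differentiation improves the decay by one power. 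Because $(-\Delta)^{d}$ is a constant-coefficient differential operator equal to a finite linear combination of derivatives $\partial^{\alpha}$ with $|\alpha|=2d$, this immediately gives, for $s=0$,
$$\left|(-\Delta)^{d}\langle x\rangle^{-\nu}\right|\lesssim \langle x\rangle^{-\nu-2d}\le \langle x\rangle^{-n-2d},$$
where the last inequality uses $\nu>n$ together with $\langle x\rangle\ge 1$. This settles the first case.

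For $s\in(0,1)$ set $g:=(-\Delta)^{d}\langle x\rangle^{-\nu}$. Differentiating the bound above shows that $g\in C^{\infty}$ with $\left|\partial^{\beta}g(x)\right|\lesssim \langle x\rangle^{-\nu-2d-|\beta|}$ for all $\beta$; in particular $g\in L^{1}(\mathbb{R}^{n})$ (because $\nu+2d>n$) and $|D^{2}g(x)|\lesssim\langle x\rangle^{-\nu-2d-2}$. I would then use the symmetric second-difference representation of the fractional Laplacian,
$$(-\Delta)^{s}g(x)=-\frac{C_{n,s}}{2}\int_{\mathbb{R}^{n}}\frac{g(x+z)+g(x-z)-2g(x)}{|z|^{n+2s}}\,dz,$$
which is well defined since $g$ is smooth and decaying, and extract the large-$|x|$ decay by splitting the $z$-integral at $|z|=|x|/2$.

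On the near region $|z|\le|x|/2$ I would bound the second difference by $|z|^{2}\sup|D^{2}g|$ along the segment; since $|x\pm\theta z|\gtrsim|x|$ there, this contributes at most $\langle x\rangle^{-\nu-2d-2}\int_{|z|\le|x|/2}|z|^{2-n-2s}\,dz\lesssim\langle x\rangle^{-\nu-2d-2s}$, the integral converging at the origin precisely because $s<1$. On the far region $|z|>|x|/2$ I would estimate the three terms of the second difference separately: the constant term $2|g(x)|$ produces the faster rate $\langle x\rangle^{-\nu-2d-2s}$ via $\int_{|z|>R}|z|^{-n-2s}\,dz\simeq R^{-2s}$, whereas the genuinely nonlocal terms $|g(x\pm z)|$ are controlled by pulling out $|z|^{-n-2s}\lesssim\langle x\rangle^{-n-2s}$ and integrating $g$, giving $\langle x\rangle^{-n-2s}\|g\|_{L^{1}}$. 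Collecting everything yields $\left|(-\Delta)^{s}g(x)\right|\lesssim\langle x\rangle^{-\nu-2d-2s}+\langle x\rangle^{-n-2s}\lesssim\langle x\rangle^{-n-2s}$ for large $|x|$ (since $\nu>n$), while for bounded $x$ the quantity is continuous and bounded, so the estimate holds trivially there.

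The main obstacle is exactly this far-field analysis in the fractional case: unlike the local operator $(-\Delta)^{d}$, the fractional Laplacian is nonlocal, so its tail cannot decay faster than the kernel $|z|^{-n-2s}$ permits. It is the nonlocal contribution bounded by $\langle x\rangle^{-n-2s}\|g\|_{L^{1}}$ that dominates and caps the decay at $\langle x\rangle^{-n-2s}$, rather than the naive $\langle x\rangle^{-\nu-2d-2s}$; this is precisely why the hypothesis $\nu>n$ (ensuring $g\in L^{1}$ and $\nu+2d>n$) is what makes the stated exponent correct. The only other care needed is matching the splitting radius to $\langle x\rangle$ and keeping track of the convergence $s<1$ near the origin.
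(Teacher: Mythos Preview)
The paper does not actually prove this lemma: it is listed among the ``auxiliary lemmas whose proofs can be found in \cite{dao-reissig-blow up}'' at the beginning of Section~4, with no argument given. Your self-contained proof---factoring $(-\Delta)^{d+s}=(-\Delta)^{s}(-\Delta)^{d}$, handling the integer part by the elementary derivative bound $|\partial^{\alpha}\langle x\rangle^{-\nu}|\lesssim\langle x\rangle^{-\nu-|\alpha|}$, and then treating the fractional part via the second-difference singular integral with a near/far splitting at $|z|=|x|/2$---is correct and is the standard route to this estimate. In particular, your identification of the nonlocal far-field term $\langle x\rangle^{-n-2s}\|g\|_{L^{1}}$ as the one that caps the decay at $\langle x\rangle^{-n-2s}$, independently of $d$, is exactly the point of the lemma and explains why the bound in the fractional case does not improve with $d$.
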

\begin{lemma}\label{lemma 4.2}
	Let $\phi:= \phi(x)= \langle x\rangle^{-\nu}$ for some $\nu>0$. For any $R>0$ and for some constant $\theta>0$, let $\phi_R$ be a function defined by
	$$ \phi_R(x)= \phi(x/R^\theta) \quad \text{ for all }\ \ x \in \mathbb{R}^{n}. $$
	Then, $(-\Delta)^\rho (\phi_R)$ with $\rho>0$ satisfies the following scaling properties for all $x \in \mathbb{R}^{n}$:
	\begin{equation*}
	(-\Delta)^\rho (\phi_R)(x)= R^{-2\rho\theta} \big((-\Delta)^\rho \phi \big)(x/R^\theta).
	\end{equation*}
\end{lemma}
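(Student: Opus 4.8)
The plan is to read the identity off the Fourier-multiplier characterization of the fractional Laplacian, which is defined uniformly for every $\rho>0$ by $\mathcal{F}\big((-\Delta)^\rho f\big)(\xi)=|\xi|^{2\rho}\,\mathcal{F}(f)(\xi)$. Writing $\lambda:=R^\theta>0$ so that $\phi_R(x)=\phi(x/\lambda)$, the first step is to record the elementary dilation rule for the Fourier transform: if $\phi_R(x)=\phi(x/\lambda)$ then $\mathcal{F}(\phi_R)(\xi)=\lambda^{n}\,\mathcal{F}(\phi)(\lambda\xi)$. Since $\phi(x)=\langle x\rangle^{-\nu}$ is a tempered distribution, all these operations are legitimate in $S'(\mathbb{R}^n)$, and by Lemma \ref{lemma 4.1} the object $(-\Delta)^\rho\phi$ is in fact a genuine decaying function, so the resulting identity may be read pointwise.

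Next I would apply the multiplier and invert. Combining the two displays gives
$$\mathcal{F}\big((-\Delta)^\rho\phi_R\big)(\xi)=|\xi|^{2\rho}\,\lambda^{n}\,\mathcal{F}(\phi)(\lambda\xi),$$
so that
$$(-\Delta)^\rho(\phi_R)(x)=\mathcal{F}^{-1}\Big(|\xi|^{2\rho}\,\lambda^{n}\,\mathcal{F}(\phi)(\lambda\xi)\Big)(x).$$
The decisive step is the change of variables $\eta=\lambda\xi$ in the inverse transform. Under it one has $|\xi|^{2\rho}=\lambda^{-2\rho}|\eta|^{2\rho}$, the Jacobian $d\xi=\lambda^{-n}d\eta$ cancels the prefactor $\lambda^{n}$, and the phase becomes $x\cdot\xi=(x/\lambda)\cdot\eta$. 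Collecting these factors yields
$$(-\Delta)^\rho(\phi_R)(x)=\lambda^{-2\rho}\,\mathcal{F}^{-1}\Big(|\eta|^{2\rho}\,\mathcal{F}(\phi)(\eta)\Big)(x/\lambda)=\lambda^{-2\rho}\big((-\Delta)^\rho\phi\big)(x/\lambda),$$
and substituting back $\lambda=R^\theta$ produces exactly $(-\Delta)^\rho(\phi_R)(x)=R^{-2\rho\theta}\big((-\Delta)^\rho\phi\big)(x/R^\theta)$, which is the claim.

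There is essentially no analytic obstacle beyond bookkeeping; the only point requiring a word of justification is that every manipulation is carried out in the class of tempered distributions, where dilation and the Fourier symbol $|\xi|^{2\rho}$ both act boundedly, so that the interchange of $(-\Delta)^\rho$ with the dilation and the change of variables are rigorous. As an alternative that avoids the Fourier transform for the restricted range $\rho\in(0,1)$, one may start from the hypersingular-integral representation $(-\Delta)^\rho\phi_R(x)=c_{n,\rho}\,\mathrm{P.V.}\!\int_{\mathbb{R}^n}\frac{\phi_R(x)-\phi_R(y)}{|x-y|^{n+2\rho}}\,dy$ and perform the substitution $z=y/\lambda$; then $dy=\lambda^{n}dz$ and $|x-y|^{n+2\rho}=\lambda^{n+2\rho}|x/\lambda-z|^{n+2\rho}$, so the net power $\lambda^{n-(n+2\rho)}=\lambda^{-2\rho}$ factors out and reproduces the same identity. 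I would present the Fourier argument as the main proof, since it treats all $\rho>0$ at once, and mention the singular-integral computation only as a remark.
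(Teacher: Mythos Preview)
Your argument is correct: the scaling identity follows directly from the Fourier-multiplier definition of $(-\Delta)^\rho$ combined with the dilation rule $\mathcal{F}\big(\phi(\cdot/\lambda)\big)(\xi)=\lambda^{n}\,\mathcal{F}(\phi)(\lambda\xi)$, and your change of variables $\eta=\lambda\xi$ is carried out cleanly. The paper itself does not prove this lemma; it merely records it as an auxiliary fact and refers the reader to \cite{dao-reissig-blow up}, so your computation supplies precisely what the paper omits. One minor point worth tightening: you invoke Lemma~\ref{lemma 4.1} to pass from the distributional identity to a pointwise one, but that lemma assumes $\nu>n$, whereas the present statement is formulated for all $\nu>0$. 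The Fourier calculation you give is nevertheless valid in $S'(\mathbb{R}^n)$ for any $\nu>0$, and in the paper's actual application the test function has $\nu=2(n+2\delta)>n$, so the pointwise reading is justified there.
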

\begin{lemma}\label{lemma 4.3}
	Let $s \in \mathbb{R}$. Let $f=f(x) \in H^s(\mathbb{R}^{n})$ and $g=g(x) \in H^{-s}(\mathbb{R}^{n})$. Then, the following relation holds:
	$$ \int_{\mathbb{R}^{n}}f(x)\,g(x)dx= \int_{\mathbb{R}^{n}}\ \hat{f}(\xi)\,\hat{g}(\xi)d\xi. $$
\end{lemma}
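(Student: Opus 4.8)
The plan is to read the left-hand side as the duality pairing between $H^{s}$ and $H^{-s}$, to verify that both sides define continuous bilinear forms on $H^{s}\times H^{-s}$, to check the identity on the Schwartz class $\mathcal{S}(\mathbb{R}^{n})$ where every integral is absolutely convergent, and finally to extend it to all of $H^{s}\times H^{-s}$ by density. The fact I would use throughout is that the Fourier transform is an isomorphism from $H^{s}(\mathbb{R}^{n})$ onto the weighted space $L^{2}\big((1+|\xi|^{2})^{s}\,d\xi\big)$, so that $\|f\|_{H^{s}}=\|(1+|\cdot|^{2})^{s/2}\hat{f}\|_{L^{2}}$ and symmetrically $\|g\|_{H^{-s}}=\|(1+|\cdot|^{2})^{-s/2}\hat{g}\|_{L^{2}}$.

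First I would establish that the right-hand side is well defined and bounded. Writing $\hat{f}(\xi)\hat{g}(\xi)=\big[(1+|\xi|^{2})^{s/2}\hat{f}(\xi)\big]\big[(1+|\xi|^{2})^{-s/2}\hat{g}(\xi)\big]$ and applying the Cauchy--Schwarz inequality gives
\[\Big|\int_{\mathbb{R}^{n}}\hat{f}(\xi)\,\hat{g}(\xi)\,d\xi\Big|\le \|f\|_{H^{s}}\,\|g\|_{H^{-s}},\]
because the two bracketed factors lie in $L^{2}(\mathbb{R}^{n})$ by the very definition of $H^{s}$ and $H^{-s}$. In particular $\hat{f}\hat{g}\in L^{1}(\mathbb{R}^{n})$, so the right-hand integral is finite, and the map $(f,g)\mapsto\int\hat{f}\hat{g}$ is a continuous bilinear form. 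The same bound shows that the left-hand side, understood as the $H^{s}$--$H^{-s}$ duality pairing, is continuous with the identical estimate.

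Next I would prove the identity for $f,g\in\mathcal{S}(\mathbb{R}^{n})$, where both integrals are classical. Inserting the integral definition of $\hat{g}$, applying Fubini's theorem, and recognising the resulting inner integral through Fourier inversion reduces the claim to the standard multiplication (Parseval) formula. With the identity secured on the dense subset $\mathcal{S}$, I would conclude by density: choose $f_{k}\to f$ in $H^{s}$ and $g_{k}\to g$ in $H^{-s}$ with $f_{k},g_{k}\in\mathcal{S}$, and pass to the limit on both sides using the continuity established above. Two continuous bilinear forms that agree on the dense set $\mathcal{S}\times\mathcal{S}$ must agree on all of $H^{s}\times H^{-s}$.

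The main obstacle is conceptual rather than computational: when $s\neq 0$ the left-hand integral $\int f g$ is not an absolutely convergent Lebesgue integral for general $f\in H^{s}$, $g\in H^{-s}$ (for instance when $s>n/2$, $f$ may be a bounded continuous function while $g$ is only a tempered distribution), so one must fix its meaning as the duality pairing and verify that this pairing is precisely the continuous extension of the classical integral from $\mathcal{S}$. A second point requiring care is the normalisation of the Fourier transform, which must be tracked so that the Parseval step yields exactly $\int\hat{f}\hat{g}$ rather than a reflected or conjugated variant; I note that in the intended application the relevant function $g$ is the radial test function $\phi_{R}$, hence even, so $\hat{g}(-\xi)=\hat{g}(\xi)$ and any reflection is harmless.
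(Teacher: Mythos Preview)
Your argument is correct and is the standard route to this Parseval-type identity: bound the bilinear form $(f,g)\mapsto\int\hat f\,\hat g$ on $H^{s}\times H^{-s}$ via the splitting $(1+|\xi|^{2})^{s/2}\hat f\cdot(1+|\xi|^{2})^{-s/2}\hat g$ and Cauchy--Schwarz, verify the formula on $\mathcal{S}\times\mathcal{S}$ by the multiplication formula, and pass to the limit by density. You also correctly flag the two genuine subtleties --- that the left-hand side must be read as the $H^{s}$--$H^{-s}$ duality pairing rather than a Lebesgue integral, and that the Parseval step really produces $\int\hat f(\xi)\hat g(-\xi)\,d\xi$, the reflection being harmless here because the test function $\phi_R$ is radial.

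There is nothing to compare against in the paper itself: Lemma~4.3 is stated without proof and referred, together with Lemmas~4.1 and~4.2, to \cite{dao-reissig-blow up}. Your write-up therefore supplies strictly more than the paper does, and in fact records a caveat (the reflection/normalisation issue) that the bare statement of the lemma suppresses.
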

Using Lemma \ref{lemma 4.3}, we may obtain the following exchange property
\begin{align*}
\int_{\mathbb{R}^{n}}(-\Delta)^{\sigma}u(t,x)\phi(x) \mathrm{d}x &=\int_{\mathbb{R}^{n}}u(t,x) (-\Delta)^{\sigma}\phi_R(x) \mathrm{d}x, \\ 
\int_{\mathbb{R}^{n}}(-\Delta)^{2\delta}u(t,x) \phi(x) \mathrm{d}x &=\int_{\mathbb{R}^{n}}u(t,x) (-\Delta)^{2\delta}\phi(x) \mathrm{d}x.
\\ 
\int_{\mathbb{R}^{n}}(-\Delta)^{\delta}u_{t}(t,x)\phi(x) \mathrm{d}x &=\int_{\mathbb{R}^{n}}u_{t}(t,x) (-\Delta)^{\delta}\phi(x)\mathrm{d}x.
\end{align*}

 Let us first define the radial space-dependent test function
$$\phi(x)=(1+|x|^{2})^{-n-2\delta}, \ \ x\in \mathbb{R}^{n},$$ and we choose the time-dependent test function $\eta=\eta(t) \in \mathcal{C}_0^{\infty}\big([0,\infty)\big)$ fulfilling
\begin{align*}
\eta(t):=\begin{cases}
1&\mbox{if} \ \ 0\le t\le 1/2,\\
\mbox{decreasing}&\mbox{if}\ \  1/2\le t\le 1,\\
0&\mbox{if} \ \ 1\leq t\leq \infty,
\end{cases}
\end{align*}
and
\begin{align}\label{4.1}
\big(\eta(t)\big)^{-\frac{p'}{p}}\left(|\eta'(t)|^{p'}+|\eta''(t)|^{p'}\right)\le C\quad \mbox{for any}\ \ t\in [1/2,1],
\end{align}
where $p'$ is the conjugate of power nonlinearity $p>1$.  We first give a definition of weak solution to (\ref{2.1}).
\begin{definition}
Let $p>1$. We say that $u\in L^{p}_{loc}([0,\infty)\times \mathbb{R}^{n})$ is a global weak solution to (\ref{2.1}), if, for any test function $ \psi \in \mathcal{C}^{\infty}_{c}([0, \infty) \times \mathbb{R}^{n})$, it holds:
\begin{align*}
\int_0^{\infty}\int_{\mathbb{R}^{n}}|u(t,x)|^p\psi(t,x)\mathrm{d}x\mathrm{d}t &=\int_{0}^{\infty}\int_{\mathbb{R}^{n}}u(t,x)\psi_{tt}(t,x)\,\mathrm{d}x\mathrm{d}t  + \int_0^{\infty}\int_{\mathbb{R}^{n}}(-\Delta)^{\sigma}\psi(t,x)\,u(t,x)\,\mathrm{d}x\mathrm{d}t
 \nonumber \\
&\hspace{-3cm}+ \int_0^{\infty}\int_{\mathbb{R}^{n}} (-\Delta)^{2\delta}\psi(t,x)\,u(t,x)\,\mathrm{d}x\mathrm{d}t-2 \int_{0}^{\infty}\int_{\mathbb{R}^{n}} (-\Delta)^{\delta}\psi_{t}(t,x)\,u(t,x)\,\mathrm{d}x\mathrm{d}t -\int_{\mathbb{R}^{n}} u_1(x)\psi(0,x)\,\mathrm{d}x.
 \nonumber
\end{align*}
If $supp\,\psi \subset [0, T]\times \mathbb{R}^{n}$ for some finite $0<T< \infty$, then $u$ is said to be a local weak solution to (\ref{2.1}).
\end{definition}

  Let $R$ be a large parameter in $[0,\infty)$, we fix the following R-dependent test function:
$$ \psi_R(t,x):= \eta_R(t)\phi_R(x)= \eta(t/R^{\alpha})\phi(x/R), $$
where $\alpha$ will be determined later to catch our result. We also define the functionals
\begin{align*}
I_R:=\int_0^{\infty}\int_{\mathbb{R}^{n}}|u(t,x)|^p\psi_R(t,x)\mathrm{d}x\mathrm{d}t=\int_0^{R^{\alpha}}\int_{\mathbb{R}^{n}}|u(t,x)|^p\psi_R(t,x)\mathrm{d}x\mathrm{d}t.
\end{align*}
and 
\begin{align*}
I_{R,t}:=\int_{R^{\alpha}/2}^{R^{\alpha}}\int_{\mathbb{R}^{n}}|u(t,x)|^p\psi_R(t,x)\mathrm{d}x\mathrm{d}t.
\end{align*}
The idea of this approach is based on contradiction argument. In fact, let us assume that we have a global (in time) Sobolev solution from $\mathcal{C}([0,\infty), L^{2}) $ to (\ref{2.1}). Thanks to the support condition for $\eta_R(t)$ and the assumption $u(0,x)=0$, we carry out integration by parts we arrive at the following equation( local weak solution):
\begin{align}
0\le I_R &= -\int_{\mathbb{R}^{n}} u_1(x)\phi_R(x)\,\mathrm{d}x + \int_{R^{\alpha}/2}^{R^{\alpha}}\int_{\mathbb{R}^{n}}u(t,x) \eta''_R(t) \phi_R(x)\,\mathrm{d}x\mathrm{d}t \nonumber \\
&\quad + \int_0^{\infty}\int_{\mathbb{R}^{n}} \eta_R(t) \phi_R(x)\, (-\Delta)^{\sigma} u(t,x)\,\mathrm{d}x\mathrm{d}t \nonumber \\
&\quad + \int_0^{\infty}\int_{\mathbb{R}^{n}} \eta_R(t) \phi_R(x)\, (-\Delta)^{2\delta} u(t,x)\,\mathrm{d}x\mathrm{d}t \nonumber \\
&\quad -2 \int_{R^{\alpha}/2}^{R^{\alpha}}\int_{\mathbb{R}^{n}} \eta'_R(t) \phi_R(x)\, (-\Delta)^{\delta} u(t,x)\,\mathrm{d}x\mathrm{d}t \nonumber \\
&:= -\int_{\mathbb{R}^{n}} u_1(x)\varphi_R(x)\,dx+ J_{1,R}+ J_{2,R}+J_{3,R}-J_{4,R}. \label{4.2}
\end{align}

Repeating the same computations as in the cited paper \cite{dao-reissig-blow up}, we may now arrive at the following estimates
$$|J_{1,R}|\lesssim I_{R,t}^{1/p}R^{-2\alpha+\frac{n+\alpha}{p'}}, \ \ |J_{2,R}|\lesssim I_{R}^{1/p}R^{-2\sigma+\frac{n+\alpha}{p'}},$$

$$|J_{3,R}|\lesssim I_{R}^{1/p}R^{-4\delta+\frac{n+\alpha}{p'}}, \ \  |J_{4,R}|\lesssim I_{R,t}^{1/p}R^{-\alpha-2\delta+\frac{n+\alpha}{p'}}.$$
Combining these estimates we get
$$0<\int_{\mathbb{R}^{n}} u_1(x)\phi_R(x)\,\mathrm{d}x \lesssim I_{R,t}^{1/p}\left(R^{-2\alpha+\frac{n+\alpha}{p'}}+ R^{-\alpha-2\delta+\frac{n+\alpha}{p'}}\right)+I_{R}^{1/p}\left(R^{-2\sigma+\frac{n+\alpha}{p'}}+ R^{-4\delta+\frac{n+\alpha}{p'}}\right)-I_{R}.$$
The best possible choice of parameter $\alpha$ is $\alpha=2\delta$, in this case, the above inequality becomes 
$$\lesssim I_{R}^{1/p} R^{-4\delta+\frac{n+2\delta}{p'}}-I_{R}.$$
Applying the crucial inequality for all $p>1$
$$ A\,y^{1/p}- y \le A^{\frac{p}{p-1}} \quad \text{ for any } A>0,\, y \ge 0 \text{ and } 0< 1/p< 1, \ \ p/(p-1)=p'$$
which leads to
$$0<\int_{\mathbb{R}^{n}} u_1(x)\phi_R(x)\,\mathrm{d}x \lesssim R^{-4\delta p'+n+2\delta}.$$
Now, let us suppose 
$$-4\delta p'+n+2\delta<0.$$ 
If we let $R\to \infty$ we get 
$$\int_{\mathbb{R}^{n}} u_1(x)\mathrm{d}x=0,$$
and this is contradict to our assumption. Hence, every Sobolev solution blows up if $p<1+\frac{4\delta}{n-2\delta}$. The proof is now completed.
\section{Conclusion}
Let us go back and consider the following Fourier multiplier 
$$m(t,\xi):=\frac{e^{-|\xi|^{2\delta}t} e^{\pm i|\xi|^{\sigma}t}}{2i|\xi|^{\sigma}}, \ \ \xi \in \mathbb{R}^{n}, \ \ t>0.$$ 
It is reasonable to distinguish between the two cases $2\delta<\sigma$ and $\sigma<2\delta$ in model (\ref{1.8}). One can see that the model (\ref{1.9}) never satisfies the first case. 
\begin{itemize}
	\item  if $\sigma<2\delta$, the oscillatory part satisfies better scaling than the diffusive part as in model (\ref{1.9}). Unfortunately, this benefit disappears when deriving $L^{2}$-energy estimates, and this later estimate is not sufficient to get sharp results. For this reason, the authors in \cite{dabbicco-ebert-lp-lq,dabbicco-ebert-critical exponent} employed another strategy based on $L^{q}$-energy estimates in time-dependent frequencies to deal with better scaling coming from oscillatory part $|\xi|^{-\sigma}e^{\pm i|\xi|^{\sigma}t}$, and they succeed to catch partial sharp result.
	\item if $2\delta<\sigma$, we have no benefit from the oscillatory part in deriving the $L^{2}$-energy estimates. Unfortunately, the benefit from diffusive part is not sufficient to get sharp results. In this case, we also remark that we have no explicit Fourier multiplier that leads to the sub-critical condition (\ref{2.4}). 
\end{itemize}
All in all, in our forthcoming paper, we will discuss more general $L^q$ $(1\leq q\leq \infty)$-energy estimates in which oscillations play a role and show whether or not the above gap may be improved, as done in the studies \cite{dabbicco-ebert-critical exponent,dabbicco-ebert-lp-lq}. Question 1.2 is now unanswered.

\subsubsection*{Acknowledgments}
The authors would like to thank the referees for their constructive comments and
suggestions that helped to improve the original manuscript. We thank Directorate-General for Scientific Research and Technological Development in Algeria (DGRSDT) for providing research conditions.

\vskip+0.5cm

\noindent \textbf{Authors address:}
\vskip+0.3cm

\noindent \textbf{Khaldi Said and Hakem Ali}

Laboratory of Analysis and Control of PDEs, Djillali Liabes University,  P.O. Box 89, Sidi-Bel-Abb\`{e}s 22000, Algeria

{\itshape E-mails:} \texttt{saidookhaldi@gmail.com, said.khaldi@univ-sba.dz, hakemali@yahoo.com}
\vskip+0.3cm

\noindent \textbf{Arioui Fatima Zahra}

Laboratory of Statistics and Stochastic Processes, Djillali Liabes University,  P.O. Box 89, Sidi-Bel-Abb\`{e}s 22000, Algeria

{\itshape E-mail:} \texttt{ariouifatimazahra@gmail.com}


\begin{thebibliography}{99}
\fontsize{9}{11pt}\selectfont
\bibitem{ebert-Lourenco} M. R. Ebert, L. M. Louren\c{c}o, The critical exponent for evolution models with power non-linearity, in: Trends in
Mathematics, New Tools for Nonlinear PDEs and Applications, Birkhäuser Basel (2019), 153--177.

\bibitem{EbertReissig} M.R. Ebert, M. Reissig, Methods for partial differential equations, qualitative properties of solutions, phase space analysis, semilinear models, Birkh\"{a}user, 2018.

\bibitem{dabbicco-ebert-critical exponent} M. D'Abbicco, M.R. Ebert, The critical exponent for semilinear $\sigma$-evolution equations with a strong non-effective damping, \emph{Nonlinear. Anal} \textbf{215}(2022), 112637.

\bibitem{dabbicco-ebert-diffusion phenomenon} M. D'Abbicco, M.R. Ebert, Diffusion phenomena for the wave equation with structural damping in the $L^p-L^q$ framework, \emph{J. Diff. Equa.} \textbf{256}(2014), 2307--2336.

\bibitem{dabbicco-ebert-lp-lq} M. D'Abbicco, M.R. Ebert, $Lp-Lq$ estimates for a parameter-dependent multiplier with oscillatory and diffusive components, J. Math. Anal. Appl. \textbf{504}(2021)

\bibitem{duong reissig} Duong T. P., M. Reissig, The external damping Cauchy problems with general powers of the Laplacian, New
Trends in Analysis and Interdisciplinary Applications, Trends in Mathematics, 537--543.

\bibitem{fujiwara-ikeda-wakasugi} K. Fujiwara, M. Ikeda and Y. Wakasugi, On the Cauchy problem for a class of semilinear second order evolution
equations with fractional Laplacian and damping, NoDEA Nonlinear Differential Equations Appl. \textbf{28}(2021).

\bibitem{fujita} H. Fujita, On the blowing up of solutions of the Cauchy problem for $u_t = \Delta u + u^{1+\alpha},$ J. Fac. Sci. Univ. Tokyo \textbf{13}(1966) 109--124.

\bibitem{dabbicco-ebert-new phenomenon} M. D'Abbicco, M.R. Ebert,  A new phenomenon in the critical exponent for structurally damped semi-linear evolution equations, \emph{Nonlinear. Anal.} \textbf{149}(2017), 1--40.

\bibitem{dabbicco-ebert-an application} M. D'Abbicco, M.R. Ebert, An application of $L^p-L^q$ decay estimates to the semilinear wave equation with parabolic-like
structural damping, \emph{Nonlinear. Anal.} \textbf{99}(2014), 16--34.

\bibitem{pham-reissig-kainan} P. T. Duong, M. Kainane, M. Reissig, Global existence for semi-linear structurally damped $\sigma$-evolution models,
\emph{J. Math. Anal. Appl.} \textbf{431}(2015) 569–596.

\bibitem{vazquez} Juan Luis Vázquez, Nonlinear diffusion with fractional Laplacian operators, in: Nonlinear Partial Differential Equations, The Abel Symposium 2010, in: Abel Symp., vol. 7, Springer-Verlag, Berlin, Heidelberg, 2012.

\bibitem{matsumura 1} A. Matsumura, On the asymptotic behavior of solutions of semi-linear wave equations, Publ. Res. Inst. Math. Sci,
\textbf{12}(1976), 169--189.

\bibitem{matsumura 2} A. Matsumura, Energy decay of solutions of dissipative wave equations, Proc. Japan Acad. Ser. A Math. Sci., \textbf{53}(1977), 232–236.

\bibitem{dao-reissig-blow up} T.A. Dao,  M. Reissig, Blow-up results for semi-linear structurally damped $\sigma$-evolution equations.
\newblock  In: Cicognani M., Del Santo D., Parmeggiani A., Reissig M. (eds) Anomalies in Partial Differential Equations. Springer INdAM Series, vol 43. Springer, Cham. (2021)
\end{thebibliography}
\end{document}